\newcommand*{\mailto}[1]{\href{mailto:#1}{\nolinkurl{#1}}}
\newcommand{\R}{{\mathbb R}}
\newcommand{\N}{{\mathbb N}}
\newcommand{\C}{{\mathbb C}}
\newcommand{\bbC}{{\mathbb{C}}}
\newcommand{\bbN}{{\mathbb{N}}}
\newcommand{\bbR}{{\mathbb{R}}}
\newcommand{\bbS}{{\mathbb{S}}}
\newcommand{\cB}{{\mathcal B}}
\newcommand{\cC}{{\mathcal C}}
\newcommand{\cD}{{\mathcal D}}
\newcommand{\cF}{{\mathcal F}}
\newcommand{\cH}{{\mathcal H}}
\newcommand{\cL}{{\mathcal L}}
\newcommand{\cM}{{\mathcal M}}
\newcommand{\cR}{{\mathcal R}}
\newcommand{\cS}{{\mathcal S}}
\newcommand{\p}{\prime}
\newcommand{\beq}{\begin{equation}}
\newcommand{\enq}{\end{equation}}
\DeclareMathOperator{\dom}{dom}
\renewcommand{\Re}{\text{\rm Re}}
\renewcommand{\Im}{\text{\rm Im}}
\renewcommand{\ln}{\text{\rm ln}}
\newcommand{\no}{\notag}
\newcommand{\lb}{\label}
\newcommand{\f}{\frac}
\newcommand{\ol}{\overline}
\newcommand{\Oh}{O}
\newcommand{\dott}{\,\cdot\,}
\newcommand{\bi}{\bibitem}
\let\geq\geqslant
\let\leq\leqslant
\DeclareMathOperator*{\LIM}{\text{l.i.m.}}
\def\theequation{\@arabic\c@equation}
\numberwithin{equation}{section}
\newtheorem{theorem}{Theorem}[section]
\newtheorem{proposition}[theorem]{Proposition}
\newtheorem{lemma}[theorem]{Lemma}
\newtheorem{corollary}[theorem]{Corollary}
\theoremstyle{remark}
\newtheorem{remark}[theorem]{Remark}
\begin{document}
	
\title[Bessel and Riesz Composition Formulas]{A Bessel Analog of the Riesz Composition Formula} 
	
\author[C.\ Fischbacher]{Christoph Fischbacher}
\address{Department of Mathematics, Baylor University, Sid Richardson Bldg., 1410 S.\,4th Street, Waco, TX 76706, USA}
\email{\mailto{C\_Fischbacher@baylor.edu}}
\urladdr{\url{https://math.artsandsciences.baylor.edu/person/christoph-fischbacher-phd}}

\author[F.\ Gesztesy]{Fritz Gesztesy}
\address{Department of Mathematics, Baylor University, Sid Richardson Bldg., 1410 S.\,4th Street, Waco, TX 76706, USA}
\email{\mailto{Fritz\_Gesztesy@baylor.edu}}
\urladdr{\url{https://math.artsandsciences.baylor.edu/person/fritz-gesztesy-phd}}

\author[R.\ Nichols]{Roger Nichols}
\address{Department of Mathematics (Dept.~6956), The University of Tennessee at Chattanooga, 615 McCallie Avenue, Chattanooga, TN 37403, USA}
\email{\mailto{Roger-Nichols@utc.edu}}
\urladdr{\url{https://sites.google.com/mocs.utc.edu/rogernicholshomepage/home}}

\dedicatory{Dedicated, with great admiration, to the memory of Larry Zalcman $($1943--2022\,$)$}

\date{\today}
\@namedef{subjclassname@2020}{\textup{2020} Mathematics Subject Classification}
\subjclass[2020]{Primary: 35J05, 35J08, 35A08; Secondary: 42B37}
\keywords{Bessel potential, Riesz potential, Riesz composition formula.}

\begin{abstract} 
We provide an elementary derivation of the Bessel analog of the celebrated Riesz composition formula and use the former to effortlessly derive the latter.
\end{abstract}

\maketitle


{\scriptsize{\tableofcontents}}

\section{Introduction} \lb{s1} 

This note aims at a pedagogical introduction into Riesz and Bessel potentials and some of their basic properties and connections with Bessel functions. In particular, we focus on offering an elementary and straightforward approach. 

Given $n \in \bbN$, the celebrated Riesz composition formula
\begin{align} \lb{1.1}
&\int_{\R^n}  d^n x^{\p} \, |x-x^{\p}|^{2\alpha-n}|x^{\p}-y|^{2\beta-n} 
= k_{\alpha,\beta,n}|x-y|^{2\alpha+2\beta-n}, \quad x, y \in \bbR^n, \; x \neq y,   \no  \\
& \hspace*{7.5cm} \alpha, \beta, (\alpha + \beta) \in (0,n/2), 
\end{align}
where
\begin{equation} \lb{1.2}
k_{\alpha,\beta,n}=\pi^{n/2}\frac{\Gamma(\alpha)\Gamma(\beta)\Gamma((n/2)-\alpha-\beta)}
{\Gamma(\alpha+\beta)\Gamma((n/2)-\alpha)\Gamma((n/2)-\beta)} 
= \f{\gamma_{\alpha,n} \gamma_{\beta,n}}{\gamma_{\alpha + \beta,n}},
\end{equation}
and 
\begin{equation} 
\gamma_{\alpha,n} = \pi^{n/2} 2^{2 \alpha} \Gamma(\alpha)/\Gamma((n/2)-\alpha),   \lb{1.4}
\end{equation}
is cited at numerous places, but rarely proved in all details. Indeed, the sources 
\cite[eq.~(11.2)]{AS56}, \cite[eq.~(1.1.3)]{La72}, \cite[Sect.~V.1]{St70}, 
all mention \eqref{1.1}, \eqref{1.2}, but only du Plessis \cite[Theorem~3.1]{Du70} provides a detailed (and not-so-short) proof. 

From the point of view of operator theory, the Riesz potential operator $\cR_{\alpha,n}$ in $L^2(\bbR^n;d^nx)$, given by  
\begin{align}
\begin{split} 
(\cR_{\alpha,n} f)(x) = \gamma_{\alpha,n}^{-1} \int_{\bbR^n} d^n y \, |x - y|^{2\alpha - n} f(y)\, \text{ for a.e.~$x\in \bbR^n$},&     \lb{1.3} \\
\alpha \in (0,n/2), \; f \in \cL(\bbR^n),&   
\end{split} 
\end{align} 
formally (and, admittedly, somewhat naively) corresponds to the $- \alpha$-th power of the Laplacian in $L^2(\bbR^n;d^nx)$, $(- \Delta)^{- \alpha}\big|_{\cL(\bbR^n)}$, $\alpha \in (0,n/2)$, with $\cL(\bbR^n)$ a Lizorkin space, see Section \ref{s3} for details. Hence, on a formal level (to be made precise later, see, e.g., \eqref{3.8}, \eqref{3.21}), the Riesz composition formula \eqref{1.1} corresponds to the semigroup property\footnote{The reader should keep in mind that what we denote by $(-\Delta)^{- \alpha}$, $\alpha \in (0,n/2)$, is often denoted by $(-\Delta)^{-\beta/2}$, $\beta \in (0,n)$.}
\begin{equation}
(- \Delta)^{- \alpha} (- \Delta)^{- \beta} = (- \Delta)^{- \alpha - \beta}, \quad \alpha, \beta, (\alpha + \beta) \in (0,n/2). 
\end{equation}

We will derive \eqref{1.1} via first principles from an analogous composition formula for Bessel potentials, namely, given $n \in \bbN$, 
\begin{align}
&\int_{\R^n} d^n x^{\p} \, \frac{\lambda^{(n-2\alpha)/4} K_{(n/2)-\alpha}\big(\lambda^{1/2}|x-x^{\p}|\big) \lambda^{(n-2\beta)/4} K_{(n/2)-\beta}\big(\lambda^{1/2}|x^{\p}-y|\big)}{|x-x^{\p}|^{(n/2)-\alpha} |x^{\p}-y|^{(n/2)-\beta}}    \no\\
&\quad=\kappa_{\alpha,\beta,n}\lambda^{(n-2\alpha-2\beta)/4}\frac{K_{(n/2)-\alpha-\beta}\big(\lambda^{1/2}|x-y|\big)}{|x-y|^{(n/2)-\alpha-\beta}},      \lb{1.7} \\
& \hspace*{2.45cm} \alpha, \beta, \lambda \in (0,\infty), \; x,y \in \R^n, \;   x \neq y,    \no
\end{align}
where
\begin{equation} \lb{1.8}
\kappa_{\alpha,\beta,n} 
=\frac{(2 \pi)^{n/2} 2^{-1} \Gamma(\alpha)\Gamma(\beta)}{\Gamma(\alpha+\beta)} = \f{\eta_{\alpha,n} \eta_{\beta,n}}{\eta_{\alpha+\beta,n}}, 
\end{equation}
and 
\begin{equation} 
\eta_{\alpha,n} = (2 \pi)^{n/2} 2^{\alpha-1} \Gamma(\alpha).  
\end{equation}
Restricting $\alpha, \beta \in (0,n/2)$ with $\alpha + \beta < n/2$, and applying the limit $\lambda\downarrow 0$ in \eqref{1.7} results in the Riesz composition formula \eqref{1.1}, \eqref{1.2}.

From an operator theory point of view, the (bounded) Bessel potential operator $\cB_{\alpha,n,z}$ in $L^2(\bbR^n;d^nx)$, given by  
\begin{align}
(\cB_{\alpha,n,z} f)(x) 
&= \eta_{\alpha,n}^{-1} (-z)^{(n-2\alpha)/4} \int_{\bbR^n} d^n y \, |x - y|^{\alpha - (n/2)}    \no \\
&\quad \times K_{(n/2)-\alpha} \big((-z)^{1/2}|x-y|\big) f(y)\, \text{ for a.e.~$x\in \bbR^n$},      \lb{1.9} \\
&\hspace*{1.2cm} \alpha \in (0,\infty), \; z \in \bbC \backslash [0,\infty), \; f \in L^2(\bbR^n;d^nx),     \no   
\end{align} 
coincides with  the $- \alpha$-th power of the Helmholtz Laplacian $- \Delta - z I_{L^2(\bbR^n;d^nx)}$ in $L^2(\bbR^n;d^nx)$,
\begin{equation}
\cB_{\alpha,n,z} = (- \Delta - z I_{L^2(\bbR^n;d^nx)})^{- \alpha}, \quad \alpha \in (0,\infty), \ z \in \bbC \backslash [0,\infty),
\end{equation}
and hence the Bessel (resp., Helmholtz) composition formula \eqref{1.7} corresponds to the semigroup property
\begin{equation}\lb{1.11}
\begin{split}
&(- \Delta - z I_{L^2(\bbR^n;d^nx)})^{- \alpha} (- \Delta - z I_{L^2(\bbR^n;d^nx)})^{- \beta}\\
&\quad = (- \Delta - z I_{L^2(\bbR^n;d^nx)})^{- \alpha - \beta},\quad \alpha, \beta \in (0,\infty),\, z\in \bbC\backslash[0,\infty).
\end{split}
\end{equation}

We conclude this introduction with a summary of the remaining sections.  In Section \ref{s2} we investigate inverse fractional powers of the Helmholtz Laplacian $- \Delta - z I_{L^2(\bbR^n;d^nx)}$ in $L^2(\bbR^n;d^nx)$.  In particular, we recall the auxiliary scalar function $g_{\alpha,n}(z;\,\cdot\,) = (|\cdot|-z)^{-\alpha}$, show that its inverse Fourier transform $g_{\alpha,n}^{\vee}(z;\,\cdot\,)$ is the convolution kernel associated with the convolution operator $(- \Delta - z I_{L^2(\bbR^n;d^nx)})^{-\alpha}$ in $L^2(\bbR^n;d^nx)$ for $\alpha\in \bbC$, $\Re(\alpha)\in (0,\infty)$, and $z\in \bbC\backslash[0,\infty)$, and discuss its $L^p$ properties.  Additional pointwise properties of $g_{\alpha,n}^{\vee}(z;\,\cdot\,)$, including a pointwise bound in terms of a Riesz-type potential that plays a key role in our derivation of \eqref{1.1} from \eqref{1.7}, are explored in Lemma \ref{l2.2}.  In Theorem \ref{2.3} we discuss the manner in which $(- \Delta +\lambda I_{L^2(\bbR^n;d^nx)})^{-\alpha}$ converges to $(-\Delta)^{-\alpha}$ as $\lambda\downarrow 0$.  In Section \ref{s3} we recall some of the basic properties of Riesz and Bessel potentials.  In addition, we use the Fourier transform to define (inverse) fractional powers of the self-adjoint realization $H_0$ of $-\Delta$ in $L^2(\bbR^n;d^nx)$ and establish connections to the operators of convolution with Riesz and Bessel potentials (see Theorems \ref{t2.1} and \ref{t3.2}).  Domain and operator core investigations of $H_0^{-\alpha}$ are considered in Proposition \ref{p1.2r} and Corollary \ref{c3.5}. Finally, in Section \ref{s4}, we use the semigroup property \eqref{1.11} to immediately deduce the Bessel composition formula \eqref{1.7}.  In turn, using the pointwise bound on $g_{\alpha,n}^{\vee}(-\lambda;\,\cdot\,)$ in terms of a Riesz-type potential from Section \ref{s2} to apply Lebesgue's dominated convergence theorem, we deduce the Riesz composition formula \eqref{1.1} by taking the limit $\lambda\downarrow 0$ throughout \eqref{1.7}. Appendix \ref{sA} discusses a useful result on a certain linear operator of convolution type. 

\medskip
\noindent 
{\bf Notation.} The symbols $f^{\wedge}(\dott)$ and $f^{\vee}(\dott)$ denote the Fourier and inverse Fourier transforms, respectively, of appropriate functions $f$; see \eqref{A.1} for the precise definitions.  The symbol $f\ast g$ denotes the convolution of an appropriate pair of functions $f$ and $g$; see \eqref{A.4}.  The symbols $J_{\nu}(\,\cdot\,)$ and $K_{\nu}(\,\cdot\,)$ denote the Bessel function and modified Bessel function of order $\nu$, respectively; see \cite[Ch.~9]{AS72}.  The inner product in a separable (complex) Hilbert space $\cH$ is denoted by $(\,\cdot\,,\,\cdot\,)_{\cH}$ and is assumed to be linear with respect to the second argument.   If $T$ is a linear operator mapping (a subspace of) a Hilbert space into another, then $\dom(T)$ denotes the domain of $T$.  The resolvent set, spectrum, and point spectrum (i.e., the set of eigenvalues) of a closed linear operator in $\cH$ will be denoted by $\rho(\,\cdot\,)$, $\sigma(\, \cdot \,)$, and $\sigma_p(\, \cdot \,)$, respectively. Similarly, the absolutely continuous and singularly continuous spectrum of a self-adjoint operator in $\cH$ are denoted by $\sigma_{ac}(\, \cdot \,)$ and $\sigma_{sc}(\, \cdot \,)$.  The Banach space of bounded linear operators on $\cH$ is denoted by $\cB(\cH)$.  Finally, for $p\in [1,\infty)$, the corresponding $\ell^p$-based trace ideals will be denoted by $\cB_p (\cH)$ with norms abbreviated by $\|\,\cdot\,\|_{\cB_p(\cH)}$.

\section{On Fractional Powers of (Helmholtz) Laplacians}  \lb{s2}

In this section we consider fractional powers of the resolvent of the self-adjoint realization of (minus) the Laplacian $-\Delta$, and the Helmholtz Laplacian $-\Delta - z I_{L^2(\bbR^n;d^nx)}$, $z\in \bbC\backslash[0,\infty)$, in $L^2(\bbR^n;d^nx)$, $n\in \bbN$.  Here the self-adjoint realization of $-\Delta$ in $L^2(\bbR^n;d^nx)$, denoted by $H_0$, is defined according to
\begin{equation}\lb{2.1}
H_0f = -\Delta f,\quad f\in \dom(H_0)=W^{2,2}(\bbR^n),
\end{equation}
where $W^{2,2}(\bbR^n) \equiv H^2(\bbR^n)$ is the Sobolev space of all functions in $L^2(\bbR^n;d^nx)$ possessing weak derivatives up to order two in $L^2(\bbR^n;d^nx)$. Since via Fourier transform, the operator $H_0$ is unitarily equivalent to the operator of multiplication by the function $|\xi|^2$, $\xi \in \bbR^n$, in $L^2(\bbR^n;d^n\xi)$, the spectrum of $H_0$ is purely absolutely continuous and given by 
\begin{equation}\lb{2.2}
\sigma(H_0) = \sigma_{ac}(H_0)=[0,\infty), \quad \sigma_{sc}(H_0) = \sigma_{p}(H_0) = \emptyset.
\end{equation}

The resolvent operator $(H_0 - zI_{L^2(\bbR^n;d^nx)})^{-1}$, $z\in \bbC\backslash [0,\infty)$, is well-known to be a convolution-type integral operator with integral kernel given by (cf., e.g., \cite[eq.~(2.19)]{GN20})
\begin{align}\lb{2.3}
\begin{split}
(H_0 - zI_{L^2(\bbR^n;d^nx)})^{-1}(x,y) = G_{1,n}(z;x,y) := g_{1,n}^{\vee}(z;x-y),&    \\
x,y\in \bbR^n,\, x\neq y,\, z\in \bbC\backslash [0,\infty),\, n\in \bbN,&
\end{split} 
\end{align}
where\footnote{We choose the branch of $(-z)^{\gamma}$, $\gamma \in \bbR \backslash\{0\}$, such that $(-z)^{\gamma} = \lambda^{\gamma} > 0$ for $z = - \lambda$, $\lambda \in (0,\infty)$.}
\begin{align}
\begin{split} 
& g_{1,n}^{\vee}(z;x)=
\begin{cases}
\big[2(-z)^{1/2}\big]^{-1}e^{- (-z)^{1/2} |x|}, \quad x \in \bbR, \; n=1,\\[1mm]
(2 \pi)^{-n/2} (-z)^{(n-2)/4} |x|^{(2-n)/2}K_{(n-2)/2}\big((-z)^{1/2} |x|\big),    \\
\hspace*{4.2cm}  x \in \bbR^{n} \backslash \{0\}, \; n\in \bbN\backslash\{1\};  
\end{cases}    \\
& \hspace*{8.03cm} z\in \bbC\backslash[0,\infty).     \lb{2.4} 
\end{split} 
\end{align}
Here our choice of notation $g_{1,n}^{\vee}$ in \eqref{2.4} anticipates the content of Lemma \ref{lA.2}.
		
Before extending \eqref{2.3}, \eqref{2.4} to fractional powers and discussing the underlying convolution operator aspects of $(H_0 -z I_{L^2(\bbR^n;d^nx)})^{-\alpha}$, $\alpha \in \bbC$, $\Re(\alpha) \in (0,\infty)$, $z \in \bbC \backslash [0,\infty)$, in $L^2(\bbR^n;d^nx)$, we now introduce the function
\begin{align}
& g_{\alpha,n}^{\vee} (z; x) =
(2 \pi)^{-n/2} 2^{1 - \alpha} \Gamma(\alpha)^{-1} (-z)^{(n - 2 \alpha)/4}|x|^{\alpha - (n/2)} 
K_{(n/2) - \alpha} \big((-z)^{1/2} |x|\big),  \no \\
& \hspace*{2cm} x \in \R^n \backslash \{0\}, \; n\in\N, \; \alpha \in \bbC, \, \Re(\alpha) \in (0,\infty), \; z \in \bbC \backslash [0,\infty).   \lb{2.5}
\end{align}

The Fourier transform of $g_{\alpha,n}^{\vee} (z;\, \cdot\,)$ may be explicitly computed as follows.  Formulas equivalent to \eqref{2.6} below may be found in \cite[(4,1) and (4,6)]{AS61} (cf.~also \cite[No.~9.6.4]{AS72} and \cite[(9.42) and Exercise 13.2 (d)]{Jo82}).

\begin{lemma} \lb{lA.2} 
Let $n \in \bbN$. Then, 
\begin{align}
\begin{split} 
& g_{\alpha,n} (z; \xi) 
= \big(g_{\alpha,n}^{\vee} (z; \, \dott \,)\big)^{\wedge}(\xi) = \big(|\xi|^2 - z\big)^{-\alpha},      \lb{2.6} \\ 
& \xi \in \bbR^n, \; \alpha \in \bbC, \, \Re(\alpha) \in (0,\infty), \; z \in \bbC \backslash [0,\infty).   
\end{split}
\end{align}
\end{lemma}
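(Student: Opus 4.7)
The plan is to establish \eqref{2.6} by a direct Fourier-analytic computation based on the gamma-function representation of $(|\xi|^2-z)^{-\alpha}$ and the classical Macdonald integral representation of $K_\nu$. Since \eqref{2.6} is the standard Fourier transform pair underlying the resolvent of the Helmholtz Laplacian, the natural route is to verify the inverse Fourier transform identity
\begin{equation*}
\big((|\cdot|^2-z)^{-\alpha}\big)^{\vee}(x) = g_{\alpha,n}^{\vee}(z;x), \quad x \in \bbR^n\backslash\{0\},
\end{equation*}
which by Fourier inversion on an appropriate (e.g., Lizorkin or Schwartz--tempered) subspace yields \eqref{2.6}.

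The first step is to write, for $\Re(\alpha)>0$ and $z\in\bbC\backslash[0,\infty)$,
\begin{equation*}
(|\xi|^2-z)^{-\alpha} = \Gamma(\alpha)^{-1}\int_0^\infty t^{\alpha-1}e^{-t(|\xi|^2-z)}\,dt,
\end{equation*}
noting that $\Re(|\xi|^2-z)>0$ so the integrand is absolutely integrable in $t$ and $\xi$ jointly (after multiplication by a Schwartz test function, or weighted by a Gaussian cutoff to be removed afterwards). I would then apply Fubini and interchange the $\xi$-integration defining the inverse Fourier transform with the $t$-integration, using the well-known heat kernel identity
\begin{equation*}
\big(e^{-t|\cdot|^2}\big)^{\vee}(x) = (4\pi t)^{-n/2}e^{-|x|^2/(4t)}, \quad t>0, \; x\in\bbR^n.
\end{equation*}
This reduces the left-hand side to
\begin{equation*}
\Gamma(\alpha)^{-1}(4\pi)^{-n/2}\int_0^\infty t^{\alpha-1-n/2}e^{tz-|x|^2/(4t)}\,dt.
\end{equation*}

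The second step is to recognize the resulting $t$-integral as the Macdonald integral representation of a modified Bessel function. Setting $\mu=-z$ (so $\Re(\mu^{1/2})>0$) and rescaling $t=u/\mu$, the integral becomes
\begin{equation*}
\mu^{(n-2\alpha)/2}\int_0^\infty u^{\alpha-1-n/2}e^{-u-\mu|x|^2/(4u)}\,du,
\end{equation*}
and with $w=\mu^{1/2}|x|$, Macdonald's formula
\begin{equation*}
K_\nu(w)=\tfrac{1}{2}(w/2)^\nu\int_0^\infty u^{-\nu-1}e^{-u-w^2/(4u)}\,du
\end{equation*}
applied with $\nu=(n/2)-\alpha$ identifies the $u$-integral with $2^{(n/2)-\alpha+1}w^{\alpha-(n/2)}K_{(n/2)-\alpha}(w)$. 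Collecting all prefactors and simplifying $2^{-n}\pi^{-n/2}\cdot 2^{(n/2)-\alpha+1}=(2\pi)^{-n/2}2^{1-\alpha}$ and the $\mu$-powers to $\mu^{(n-2\alpha)/4}$ reproduces precisely \eqref{2.5}.

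The main obstacle is justification of the Fubini interchange for all $\Re(\alpha)>0$ and of the branch choices for $(-z)^{1/2}$ and $(-z)^{(n-2\alpha)/4}$. I would handle the interchange by first restricting to $x\neq 0$ and introducing a Gaussian regularizer $e^{-\eps|\xi|^2}$ to enforce absolute integrability, passing to the limit $\eps\downarrow 0$ afterwards via dominated convergence (the Bessel function decays exponentially for $|w|\to\infty$ and behaves like $|w|^{-|\Re(\nu)|}$ near zero, so both sides are locally integrable in $\xi$ for $\Re(\alpha)>0$). The branch consistency follows from the prescription in the footnote to \eqref{2.4}, and both sides extend analytically in $\alpha$ and $z$ to the stated domain, so identity on $z=-\lambda<0$ and real $\alpha>0$ suffices by analytic continuation in $(\alpha,z)$.
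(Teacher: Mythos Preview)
Your approach is correct but genuinely different from the paper's. The paper computes the \emph{forward} Fourier transform of $g_{\alpha,n}^{\vee}(z;\cdot)$ directly: it passes to spherical coordinates, evaluates the angular integral to produce a $J_{(n-2)/2}$, and then appeals to a tabulated Weber--Schafheitlin-type integral (Gradshteyn--Ryzhik No.~6.5767) to compute the remaining radial $K\times J$ integral; the case $n=1$ is handled separately. You instead start from the Euler--Gamma representation of $(|\xi|^2-z)^{-\alpha}$, swap the $t$- and $\xi$-integrations via the Gaussian heat kernel, and identify the resulting $t$-integral with Macdonald's representation of $K_{(n/2)-\alpha}$. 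Your route is dimension-uniform, avoids the spherical machinery and the table lookup, and as a bonus produces exactly the subordination integral \eqref{2.37} that the paper only derives later in Lemma~\ref{l2.2}. Two small points: with the paper's symmetric convention \eqref{A.1} one has $(e^{-t|\cdot|^2})^{\vee}(x)=(2t)^{-n/2}e^{-|x|^2/(4t)}$, not $(4\pi t)^{-n/2}e^{-|x|^2/(4t)}$, so your intermediate prefactor should read $2^{-n/2}$ rather than $(4\pi)^{-n/2}$ (the end result is unchanged); and your Fubini step, as written, is only absolutely convergent for $\Re(\alpha)>n/2$, so it is cleanest to establish the identity there first and then invoke analyticity of both sides in $\alpha$ on $\{\Re(\alpha)>0\}$ (which you already note for $z$) rather than rely on the $\varepsilon$-regularizer.
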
 
\begin{proof}
It suffices to assume $-z = \lambda \in (0,\infty)$ and $\alpha \in (0,\infty)$ as the case $z \in \bbC \backslash [0,\infty)$ and $\Re(\alpha) \in (0,\infty)$ follows via separate analytic continuations (invoking Morera's theorem).

For now, assume $n \in \bbN \backslash \{1\}$; we will comment on the case $n=1$ later. Introducing spherical coordinates in $\bbR^n$, $n \in \bbN$, $n \geq 2$, Cartesian and polar coordinates (cf.\, e.g., \cite{Bl60}) on $\bbS^{n-1}$ are related via  
\begin{align} 
& x = (x_1,\dots,x_n) \in \bbR^n,    \no \\
& x = r \omega, \; \omega = \omega(\theta) = \omega(\theta_{1},\theta_{2},\dots,\theta_{n-1}) 
= x/|x| \in \bbS^{n-1},   \lb{2.7} \\
& x_k \in \bbR, \, 1 \leq k \leq n, \; r = |x| \in[0,\infty), \; \theta_{1}\in[0,2\pi), \; \theta_{j}\in[0,\pi], \, 2 \leq j\leq n-1,   \no 
\end{align} 
where (cf., e.g., \cite{Bl60}, \cite[Sect.~1.5]{DX13}) 
\begin{equation}\lb{2.8}
\begin{cases}
x_{1}=r \sin(\theta_1) \prod\limits_{j=2}^{n-1} \sin(\theta_j),    \\[1mm] 
x_{2}=r \cos(\theta_1) \prod\limits_{j=2}^{n-1} \sin(\theta_j),  \\
\; \vdots\\
x_{n-1}=r \cos(\theta_{n-2}) \sin(\theta_{n-1}), \\
x_{n}=r \cos(\theta_{n-1}).
\end{cases}
\end{equation}
The surface measure $d^{n-1}\omega$ on $\bbS^{n-1}$ and the volume element in $\bbR^n$ then read
\begin{equation}
d^{n-1}\omega(\theta) = d \theta_1 \prod_{j=2}^{n-1}[\sin(\theta_j)]^{j-1} d\theta_j, \quad d^{n}x=r^{n-1}dr\,d^{n-1}\omega (\theta).    \lb{2.9}
\end{equation}
With this notation at hand, one obtains, see, \cite[p.~65--66]{Bl60}, 
\begin{align}
& \int_{\bbS^{n-1}} d^{n-1}\omega  \, e^{\pm i |\xi| r \cos(\theta_{n-1})} = \f{2 \pi^{(n-1)/2}}{\Gamma((n-1)/2)}
\int_0^{\pi} [\sin(\theta_{n-1})]^{n-2} d \theta_{n-1} \, e^{\pm i |\xi| r \cos(\theta_{n-1})}    \no \\
&\quad = \f{2 \pi^{(n-1)/2}}{\Gamma((n-1)/2)} \int_0^{\pi}  [\sin(\theta_{n-1})]^{n-2} \, d \theta_{n-1}  \, \cos( |\xi| r \cos(\theta_{n-1}))    \no \\
&\quad = 2 \pi^{n/2} \bigg(\f{2}{|\xi| r}\bigg)^{(n-2)/2} J_{(n-2)/2} (|\xi| r),     \lb{2.10} 
\end{align}
where we used \cite[No.~9.1.20]{AS72}, an integral representation for the regular (at $x=0$) Bessel function $J_{\mu}(\, \dott \,)$ in the last step; namely,
\begin{align}
J_{\mu}(\zeta) &= (\zeta/2)^{\mu} \sum_{k=0}^{\infty} \f{\big(- \zeta^2/2\big)^k}{k! \Gamma(\mu + k + 1)}, 
\quad \zeta \in \bbC \backslash (-\infty,0], \; \mu \in \bbC \backslash \{ - \bbN\},    \lb{2.11} \\
&= \pi^{-1/2} \Gamma(\mu + (1/2))^{-1} (\zeta/2)^{\mu} \int_0^{\pi} [\sin(\theta)]^{2 \mu} d \theta \, 
\cos(\zeta \cos(\theta)),    \lb{2.12} \\ 
& \hspace*{4.4cm} \zeta \in \bbC \backslash (-\infty,0], \; \Re(\mu) > - 1/2.   \no 
\end{align}

Since for $\alpha, \lambda \in (0,\infty)$, $g_{\alpha,n}^{\vee}(- \lambda;\, \dott \,) \in L^1(\bbR^n)$ by \eqref{2.27}, one then computes 
\begin{align}
& \int_{\bbR^n} d^n x \, |x|^{\alpha - (n/2)} K_{(n/2) - \alpha}\big(\lambda^{1/2} |x|\big) e^{- i x \cdot \xi}    \no \\
& \quad = \int_0^{\infty} r^{n-1} dr \, r^{\alpha - (n/2)} K_{(n/2) - \alpha} \big(\lambda^{1/2} r\big) 
\int_{\bbS^{n-1}}  d^{n-1}\omega \, e^{- i |\xi| r \cos(\theta_{n-1})}     \no \\ 
& \quad = 2 \pi^{n/2} \bigg(\f{2}{|\xi|}\bigg)^{(n-2)/2} 
\int_0^{\infty} dr \, r^{\alpha} K_{(n/2) - \alpha} \big(\lambda^{1/2} r\big) J_{(n-2)/2} (|\xi| r),  \no \\
&  = 2^{(n/2) + \alpha - 1} \pi^{n/2} \lambda^{(2 \alpha - n)/4} 
\Gamma(\alpha) \big(|\xi|^2 + \lambda\big)^{- \alpha}.     \lb{2.13}
\end{align}
employing \eqref{2.10} and \cite[No.~6.5767]{GR80}.  

Finally, if $n=1$, then using that 
\begin{equation}
J_{-1/2}(|\xi|r) = [2/(\pi |\xi| r)]^{1/2} \cos(|\xi|r),\quad r\in (0,\infty),     \lb{2.20}
\end{equation}
and the fact that $|\cdot|^{\alpha-(1/2)}K_{1/2-\alpha}(\lambda^{1/2}|\cdot|)$ is an even function, this implies that the first and third integrals in \eqref{2.13} (setting $n=1$) are, in fact, equal.

Putting \eqref{2.5} and \eqref{2.13}--\eqref{2.20} together proves \eqref{2.6} for $\alpha \in (0,\infty)$. 
\end{proof}

Applying the large argument asymptotics of $K_{\nu}(\,\cdot\,)$ (see, e.g., \cite[No.~9.7.2]{AS72}),
\begin{equation}
K_{\nu}(\zeta) \underset{|\zeta| \uparrow \infty}{\sim} (\pi/2)^{1/2}\zeta^{-1/2}e^{-\zeta},\quad \nu\in \bbC,
\end{equation}
one infers that $g_{\alpha,n}^{\vee}(z;x)$ has exponential falloff as $|x| \to \infty$ of the form:
\begin{equation}\lb{2.22}
\begin{split}
g_{\alpha,n}^{\vee}(z;x) \underset{|x| \uparrow \infty}{\sim} c_{\alpha,n} |x|^{(2 \alpha - n -1)/2} (-z)^{(n-2\alpha-1)/4} e^{- (-z)^{1/2} |x|},&\\ 
\Re(\alpha) \in (0,\infty), \; z  \in \bbC \backslash [0,\infty),&
\end{split}
\end{equation}
where
\begin{equation}
c_{\alpha,n} = 2^{(1-n-2\alpha)/2}\pi^{(1-n)/2}\Gamma(\alpha)^{-1}.
\end{equation}
In addition, applying the small argument asymptotics of $K_{\nu}(\,\cdot\,)$ (see, e.g., \cite[No.~9.6.8 and No.~9.6.9]{AS72}),
\begin{align}
K_0(\zeta) &\underset{|\zeta| \downarrow 0}{\sim} -\ln(\zeta),\no\\
K_{\nu}(\zeta) &\underset{|\zeta| \downarrow 0}{\sim} 2^{\nu-1}\Gamma(\nu)\zeta^{-\nu},\quad \Re(\nu)>0,\lb{2.24} \\
K_{\nu}(\zeta) &\underset{|\zeta| \downarrow 0}{\sim} 2^{-\nu-1}\Gamma(-\nu)\zeta^{\nu},\quad \Re(\nu)<0\no
\end{align}
(note that the third asymptotic relation in \eqref{2.24} follows from the second combined with the reflection identity $K_{\nu}(\,\cdot\,) = K_{-\nu}(\,\cdot\,)$, $\nu\in \bbC$; see, \cite[No.~9.6.6]{AS72}), one infers that $g_{\alpha,n}^{\vee}(z;x)$ behaves near $x=0$ like 
\begin{align} \lb{2.25}
\begin{split} 
&	g_{\alpha,n}^{\vee}(z;x) \underset{|x| \downarrow 0}{\sim} C_{\alpha,n} \begin{cases} |x|^{2 \alpha - n}, 
& \Re(\alpha) \in (0,n/2), \\
\ln\big((-z)^{1/2}|x|\big), & \alpha = n/2, \\
\Oh(1), & \Re(\alpha) = n/2, \; \Im(\alpha) \in \bbR \backslash \{0\}, \\
(-z)^{(n-2\alpha)/2}, & \Re(\alpha) \in (n/2,\infty),
\end{cases}     \\
& \hspace*{8.45cm}	 z \in \bbC \backslash [0,\infty),
\end{split} 
\end{align}
where 
\begin{equation}
C_{\alpha,n} = \begin{cases}
\pi^{-n/2}2^{-2\alpha}\Gamma(\alpha)^{-1}\Gamma((n/2)-\alpha), & \Re(\alpha)\in (0,n/2), \\
- \pi^{-n/2} 2^{1-n} \Gamma(n/2), & \alpha = n/2, \\
\pi^{-n/2} 2^{-n} \Gamma(\alpha)^{-1}\Gamma(\alpha-(n/2)), & \Re(\alpha)\in (n/2,\infty).
\end{cases}
\end{equation}	

The case $\Re(\alpha) = n/2$, $\Im(\alpha) \in \bbR\backslash\{0\}$, in \eqref{2.25} is somewhat tricky as then the asymptotics exhibits bounded but oscillatory behavior as $|x| \downarrow 0$. This follows from combining \cite[No.~13.6.10]{{OLBC10}} (or \cite[No.~13.6.21]{AS72}), the expression of $K_{\nu}(\dott)$ in terms of the irregular confluent hypergeometric function, and the asymptotic behavior \cite[No.~13.2.18]{{OLBC10}} of the latter (see also \cite[Sect.~10.45]{{OLBC10}}). 

In particular, \eqref{2.22} and \eqref{2.25} imply:
\begin{equation}
g_{\alpha,n}^{\vee}(z;\, \dott \,) \in L^1(\bbR^n;d^nx), \quad \Re(\alpha) \in (0,\infty), \; z \in \bbC \backslash [0,\infty),  \lb{2.27} 
\end{equation}
and, using the explicit form of $g_{\alpha,n}(z;\, \dott \,)$ in \eqref{2.6},
\begin{equation}\lb{2.28} 
\begin{split} 
g_{\alpha,n}(z;\, \dott \,)  \in L^p(\bbR^n;d^n \xi), \; 
g_{\alpha,n}^{\vee}(z;\, \dott \,) \in L^1(\bbR^n;d^nx) \cap L^{q}(\bbR^n;d^nx),&  \\ 
p\in(1,\infty),\; p^{-1}+q^{-1}=1, \;\Re(\alpha) \in (n/(2p),\infty), \; z \in \bbC \backslash [0,\infty).&    
\end{split} 
\end{equation}			

Thus, Theorem \ref{tA.1}\,$(ii)$ applies to $g_{\alpha,n}(z,\xi) = \big(|\xi|^2 - z\big)^{-\alpha}$, $\xi \in \bbR^n$, $z \in \bbC \backslash [0,\infty)$, for all $\Re(\alpha) \in (0,\infty)$ (also, Theorem \ref{tA.1}\,$(i)$ applies for all $\Re(\alpha) \in (n/4,\infty)$), and one concludes that $(H_0 - z I_{L^2(\bbR^n;d^nx)})^{-\alpha}$, $\alpha \in \bbC$, $\Re(\alpha) \in (0,\infty)$, $z \in \bbC \backslash [0,\infty)$, is a convolution integral operator with integral kernel given by  
\begin{align}
(H_0 - z I_{L^2(\bbR^n;d^nx)})^{-\alpha}(x,y) = G_{\alpha,n}(z;x, y) 
= g_{\alpha,n}^{\vee}(z;x-y),&   \lb{2.29} \\
x, y \in \R^n, \;  x \neq y, \; 
\Re(\alpha) \in (0,\infty), \; z \in \bbC \backslash [0,\infty), \; n \in \bbN.&       \no
\end{align}
In particular, 
\begin{align}
& \big((H_0 - z I_{L^2(\bbR^n;d^nx)})^{-\alpha} f\big)(x) = \int_{\bbR^n} d^n y \, 
G_{\alpha,n}(z;x,y) f(y) \no \\
& \quad = \int_{\bbR^n} d^n y \, g_{\alpha,n}^{\vee}(z; x - y) f(y)      \lb{2.30}  \\
& \quad = \big(g_{\alpha,n}^{\vee}(z;\dott) * f\big)(x), \quad \alpha \in(0,\infty), \, z \in \bbC \backslash [0,\infty),  
\, f \in L^2(\bbR^n;d^nx).  \no
\end{align}

In this context we note that Young's (convolution) inequality, \eqref{A.3}, applies to \eqref{2.30} with $p=1$, $q=r=2$, $g = g_{\alpha,n}^{\vee}$, $h = f$, see \eqref{3.23}.

We continue with the following result on Bessel potentials.

\begin{lemma} \lb{l2.2}
Suppose that $n\in \bbN$ and $\alpha, \beta \in \bbC$, $\Re(\alpha), -\Re(z) \in (0,\infty)$.~Then   
the following items $(i)$--$(iii)$ hold:\\[1mm]
$(i)$ For $\alpha, \lambda \in (0,\infty)$, the Bessel potential $g_{\alpha,n}^{\vee}(-\lambda;\dott)$ is strictly monotone decreasing with respect to increasing $\lambda \in (0,\infty)$. \\[1mm]
$(ii)$ The Bessel potential is pointwise dominated by a Riesz-type potential, that is,
\begin{align}
\begin{split} 
\big|g_{\alpha,n}^{\vee}(z;x)\big| < 2^{n/2} 
\f{\Gamma((n/2)-\Re(\alpha))}{2^{2 \Re(\alpha)} |\Gamma(\alpha)|} |x|^{2 \Re(\alpha)-n},&  \\
\Re(\alpha) \in (0,n/2), \; -\Re(z) \in (0,\infty), \; x \in \bbR^n \backslash \{0\}.&   \lb{2.31} 
\end{split} 
\end{align} 
In particular, in the special real-valued case one has
\begin{align}
\begin{split}
& g_{\alpha,n}^{\vee}(-\lambda;x) < \gamma_{\alpha,n}^{-1} |x|^{2\alpha - n},   \\
& \alpha \in (0,n/2), \; \lambda \in (0,\infty), \; x \in \bbR^n \backslash \{0\}. 
\end{split} 
\end{align}
$(iii)$ The Bessel potential converges to the Riesz potential as $z \to 0$, $z \in S_{\varepsilon}$
\begin{equation}
\lim_{\substack{z \to 0 \\ z \in S_{\varepsilon}}} g_{\alpha,n}^{\vee}(z;x) 
= \gamma_{\alpha,n}^{-1} |x|^{2\alpha-n}, \quad \Re(\alpha) \in (0,n/2), \; \; x \in \bbR^n \backslash \{0\}, 
\end{equation}
where $S_{\varepsilon}$ denotes the sector
\begin{equation}
S_{\varepsilon} = \{z \in \bbC\backslash\{0\}\,|\, \arg(z) \in [(\pi/2) + \varepsilon, (3\pi/2) - \varepsilon]\},
\end{equation}
for some $\varepsilon \in (0, \pi/2]$. 
\end{lemma}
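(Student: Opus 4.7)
My plan is to derive a heat-kernel subordination representation for $g_{\alpha,n}^{\vee}(z;\,\cdot\,)$ from which all three items follow rapidly. Starting from the Gamma-function identity $w^{-\alpha} = \Gamma(\alpha)^{-1}\int_0^\infty t^{\alpha-1} e^{-tw}\,dt$ applied to $w = |\xi|^2 - z$ (which has positive real part, since $-\Re(z) > 0$), and passing to the inverse Fourier transform using the Gaussian heat kernel formula, I obtain the representation
\[
g_{\alpha,n}^{\vee}(z;x) \;=\; \frac{1}{\Gamma(\alpha)(4\pi)^{n/2}} \int_0^\infty t^{\alpha-1-n/2}\, e^{tz-|x|^2/(4t)}\,dt, \qquad x\in\bbR^n\backslash\{0\}.
\]
This is essentially the kernel version of the operator identity $(H_0 - zI)^{-\alpha} = \Gamma(\alpha)^{-1}\int_0^\infty t^{\alpha-1} e^{tz} e^{-tH_0}\,dt$ for $\Re(\alpha), -\Re(z) \in (0,\infty)$, together with the explicit Gaussian form of the heat kernel of $e^{-tH_0}$; alternatively, it can be established via Fubini for $\Re(\alpha) > n/2$ (where $g_{\alpha,n}(z;\,\cdot\,)\in L^1$ by \eqref{2.28}) and extended by analytic continuation in $\alpha$ to all $\Re(\alpha)>0$, both sides being analytic in that region.

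Given this subordination formula, item $(i)$ is immediate: for $z=-\lambda$, $\lambda>0$, the integrand $t^{\alpha-1-n/2}e^{-t\lambda-|x|^2/(4t)}$ is positive and, at every $t>0$, strictly decreasing in $\lambda$, hence so is the integral. For item $(ii)$, I invoke $|e^{tz}| = e^{t\Re(z)} < 1$ for $t>0$ (because $\Re(z)<0$) and apply the triangle inequality to obtain the strict estimate $|g_{\alpha,n}^{\vee}(z;x)| < |\Gamma(\alpha)|^{-1}(4\pi)^{-n/2} \int_0^\infty t^{\Re(\alpha)-1-n/2} e^{-|x|^2/(4t)}\,dt$. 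The substitution $s = |x|^2/(4t)$ (convergent precisely for $\Re(\alpha) \in (0,n/2)$) evaluates the right-hand integral to $2^{n-2\Re(\alpha)}|x|^{2\Re(\alpha)-n}\Gamma((n/2)-\Re(\alpha))$, which, after simplification, yields the bound $|g_{\alpha,n}^{\vee}(z;x)| < \pi^{-n/2}\Gamma((n/2)-\Re(\alpha))/\bigl(2^{2\Re(\alpha)}|\Gamma(\alpha)|\bigr)\,|x|^{2\Re(\alpha)-n}$. Since $\pi^{-n/2}<2^{n/2}$, this implies the stated estimate \eqref{2.31}; in the real case $z=-\lambda$, $\alpha\in(0,n/2)$, the same computation delivers exactly $\gamma_{\alpha,n}^{-1}|x|^{2\alpha-n}$ with strict inequality.

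For item $(iii)$, I apply Lebesgue's dominated convergence theorem to the subordination formula. For $z\in S_\varepsilon$, the estimate $\Re(z) = |z|\cos(\arg z) \leq -|z|\sin(\varepsilon) \leq 0$ gives $|e^{tz}| \leq 1$ uniformly in $z\in S_\varepsilon$ and $t>0$, so for each fixed $x\neq 0$ the integrand is dominated by $t^{\Re(\alpha)-1-n/2}e^{-|x|^2/(4t)}$, which is integrable on $(0,\infty)$ thanks to $\Re(\alpha)\in(0,n/2)$ (integrability at $t\downarrow 0$ coming from the Gaussian factor and at $t\uparrow\infty$ from the polynomial power). As $z\to 0$ in $S_\varepsilon$, the integrand converges pointwise to $t^{\alpha-1-n/2}e^{-|x|^2/(4t)}$, and DCT together with the integral evaluation from $(ii)$ delivers the claimed limit $\gamma_{\alpha,n}^{-1}|x|^{2\alpha-n}$. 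The main technical subtlety throughout is the rigorous derivation of the subordination formula in the regime $\Re(\alpha)\in(0,n/2]$, where pointwise Fourier inversion of $(|\xi|^2-z)^{-\alpha}$ is not absolutely convergent, but this is resolved cleanly either by analytic continuation in $\alpha$ or by the operator-theoretic route.
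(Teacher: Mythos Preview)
Your proof is correct and follows essentially the same route as the paper: both establish the heat-kernel subordination representation \eqref{2.37} for $g_{\alpha,n}^{\vee}(z;x)$ and then read off $(i)$ from strict monotonicity of the integrand in $\lambda$, $(ii)$ from the strict bound $|e^{tz}|<1$ together with the Gamma integral obtained after the substitution $s=|x|^2/(4t)$, and $(iii)$ from dominated convergence with the same majorant. The only notable difference is how the subordination formula itself is obtained: the paper derives it directly from the tabulated integral representation \eqref{2.35} for $K_\nu$ applied to the explicit expression \eqref{2.5}, which works immediately for all $\Re(\alpha)>0$ without the analytic-continuation or operator-theoretic justification you invoke to cover $\Re(\alpha)\in(0,n/2]$.
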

\begin{proof}
The integral representation 
\begin{equation}
2 (\beta/\gamma)^{\nu/2} K_{\nu}\big(2 (\beta \gamma)^{1/2}\big) 
= \int_0^{\infty} dt \, e^{- (\beta/t) -\gamma t} t^{\nu -1}, 
\quad \Re(\beta), \Re(\gamma) \in (0,\infty), \; \nu \in \bbR, 
\lb{2.35} 
\end{equation}
see, for instance, \cite[No.~3.4719]{GR80}, \cite[p.~85]{MOS66} (see also \cite[Subsect.~1.2.4]{AH96}, \cite[p.~233]{Ha06}, \cite[p.~296]{MS01}, \cite[p.~132]{St70}), yields
\begin{align}
& g_{\alpha,n}^{\vee}(z;x)   \no \\
& \quad = 2^{1-\alpha} \Gamma(\alpha)^{-1} 
(-z)^{(n - 2\alpha)/4} |x|^{\alpha -(n/2)} K_{(n/2)-\alpha}\big((-z)^{1/2} |x|\big)   \no \\
& \quad = 2^{-n/2} \Gamma(\alpha)^{-1} \int_0^{\infty} dt \, e^{z/t} e^{-|x|^2 t/4}t^{(n/2) - \alpha -1},    \lb{2.36} \\
& \quad = 2^{-n/2} \Gamma(\alpha)^{-1} \int_0^{\infty} ds \, e^{zs} e^{-|x|^2/(4s)}s^{\alpha - (n/2) - 1},    \lb{2.37} \\
& \hspace*{2.55cm} \Re(\alpha), -\Re(z) \in (0,\infty), \; x \in \bbR^n \backslash \{0\}.   \no  
\end{align} 
Thus, relation \eqref{2.36} implies strict monotone decreasing of $g_{\alpha,n}^{\vee}(-\lambda;\dott)$, $\alpha \in (0,\infty)$, with respect to increasing $\lambda \in (0,\infty)$, and hence proves item $(i)$. 

Employing $|e^{z/t}| < 1$, $-\Re(z), t \in (0,\infty)$, in \eqref{2.36} also yields the estimate 
\begin{align}
& \big|g_{\alpha,n}^{\vee}(z;x)\big|   \no \\
& \quad \leq 2^{-n/2} |\Gamma(\alpha)|^{-1} \int_0^{\infty} dt \, e^{\Re(z)/t} e^{-|x|^2 t/4}t^{[(n-2)/2 - \Re(\alpha)]}     \no \\
& \quad < 2^{-n/2} |\Gamma(\alpha)|^{-1} \int_0^{\infty} dt \, e^{-|x|^2 t/4}t^{[(n-2)/2 - \Re(\alpha)]}    \no \\
& \quad = 2^{n/2} \f{\Gamma((n/2)-\Re(\alpha))}{2^{2 \Re(\alpha)} |\Gamma(\alpha)|} |x|^{2 \Re(\alpha)-n},      \\
& \hspace*{1.3cm} \Re(\alpha) \in (0,n/2), \; -\Re(z) \in (0,\infty), \; x \in \bbR^n \backslash \{0\},   \no 
\end{align}
where we changed variables $t \mapsto u = t |x|^2/4$ and then employed the integral representation \cite[No.~6.1.1]{AS72} for the Gamma function in the final step, proving item $(ii)$. 

To prove item $(iii)$, one uses the estimate $\big|e^{z/t} t^{-\alpha}\big| \leq t^{-\Re(\alpha)}$ and Lebesgue's dominated convergence theorem to interchange the limit $z \to 0$, $z \in S_{\varepsilon}$, with the parameter integral in \eqref{2.36}. 
\end{proof}

We conclude this section by proving that $(H_0 + \lambda I_{L^2(\bbR^n;d^nx)})^{- \alpha}$ converges to $H_0^{- \alpha}$ in norm resolvent sense as $\lambda \downarrow 0$.  The proof of this result relies on the following explicit characterization of the norm of a bounded multiplication operator.

\begin{lemma}\lb{l2.3m}
Let $n\in \bbN$ and $h\in L^{\infty}(\bbR^n;d^nx)$.  The operator of multiplication $\cM_h:L^2(\bbR^n;d^nx)\to L^2(\bbR^n;d^nx)$ defined by $\cM_{h}:f\mapsto hf$, $f\in L^2(\bbR^n;d^nx)$ is a bounded linear operator and
\begin{equation}\lb{2.33m}
\|\cM_h\|_{\cB(L^2(\bbR^n;d^nx))} = \|h\|_{L^{\infty}(\bbR^n;d^nx)}.
\end{equation}
\end{lemma}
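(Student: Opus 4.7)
The plan is to establish the two inequalities $\|\cM_h\|_{\cB(L^2(\bbR^n;d^nx))} \leq \|h\|_{L^{\infty}(\bbR^n;d^nx)}$ and $\|\cM_h\|_{\cB(L^2(\bbR^n;d^nx))} \geq \|h\|_{L^{\infty}(\bbR^n;d^nx)}$ separately. Both are standard, but I want to be careful with the second one since $\bbR^n$ has infinite Lebesgue measure, so one cannot simply take the indicator of a level set $\{x : |h(x)| > \|h\|_\infty - \varepsilon\}$ as the test function without a finite-measure restriction.

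For the upper bound I would first verify that $\cM_h f$ is measurable for $f \in L^2(\bbR^n; d^n x)$ (immediate, as the product of two measurables) and then estimate
\begin{equation*}
\|\cM_h f\|^2_{L^2(\bbR^n; d^n x)} = \int_{\bbR^n} d^n x \, |h(x)|^2 |f(x)|^2 \leq \|h\|_{L^{\infty}(\bbR^n; d^n x)}^2 \|f\|^2_{L^2(\bbR^n; d^n x)},
\end{equation*}
the inequality holding for a.e.\ $x$ by the definition of the essential supremum. This gives $\cM_h \in \cB(L^2(\bbR^n; d^n x))$ with $\|\cM_h\|_{\cB(L^2(\bbR^n;d^nx))} \leq \|h\|_{L^{\infty}(\bbR^n; d^n x)}$.

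For the lower bound I would fix $\varepsilon \in (0, \|h\|_{L^{\infty}(\bbR^n; d^n x)})$ (the case $\|h\|_{L^{\infty}(\bbR^n; d^n x)} = 0$ being trivial) and set
\begin{equation*}
E_\varepsilon := \{x \in \bbR^n : |h(x)| > \|h\|_{L^{\infty}(\bbR^n; d^n x)} - \varepsilon\}.
\end{equation*}
By the definition of the essential supremum, $E_\varepsilon$ has strictly positive Lebesgue measure. Using $\sigma$-finiteness of $\bbR^n$ (exhausting by balls $B(0,R)$), I would pick $R > 0$ so that $F_\varepsilon := E_\varepsilon \cap B(0,R)$ satisfies $0 < |F_\varepsilon| < \infty$, and take as test vector $f_\varepsilon = |F_\varepsilon|^{-1/2} \chi_{F_\varepsilon} \in L^2(\bbR^n; d^n x)$, which has unit norm. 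Then
\begin{equation*}
\|\cM_h f_\varepsilon\|^2_{L^2(\bbR^n; d^n x)} = |F_\varepsilon|^{-1} \int_{F_\varepsilon} d^n x \, |h(x)|^2 \geq \bigl(\|h\|_{L^{\infty}(\bbR^n; d^n x)} - \varepsilon\bigr)^2,
\end{equation*}
and since $\varepsilon \in (0, \|h\|_{L^{\infty}(\bbR^n; d^n x)})$ is arbitrary, this yields the reverse inequality and hence \eqref{2.33m}. The main (minor) obstacle is the infinite-measure issue just mentioned; otherwise the argument is entirely routine and does not require anything more than the definition of the essential supremum and $\sigma$-finiteness of $\bbR^n$.
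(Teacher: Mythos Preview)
Your proof is correct and follows essentially the same route as the paper: an immediate upper bound from the definition of the essential supremum, and a lower bound obtained by testing $\cM_h$ on the characteristic function of a positive, finite-measure subset of the level set $\{x:|h(x)|\geq \|h\|_{L^\infty}-\varepsilon\}$. Your explicit use of $\sigma$-finiteness (intersecting with a large ball) just spells out how such a subset is found, a step the paper states without elaboration.
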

\begin{proof}
Let $h\in L^{\infty}(\bbR^n;d^nx)$ and $\cM_h$ the operator of multiplication by $h$ in the Hilbert space $L^2(\bbR^n;d^nx)$.  It is clear that $\cM_h$ is a linear operator and that
\begin{align}
\|\cM_h f\|_{L^2(\bbR^n;d^nx)} \leq \|h\|_{L^{\infty}(\bbR^n;d^nx)}\|f\|_{L^2(\bbR^n;d^nx)},\quad f\in L^2(\bbR^n;d^nx).
\end{align}
Hence, $\|\cM_h\|_{\cB(L^2(\bbR^n;d^nx))} \leq \|h\|_{L^{\infty}(\bbR^n;d^nx)}$.  Therefore, to establish \eqref{2.33m}, it suffices to show that
\begin{equation}\lb{2.35m}
 \|h\|_{L^{\infty}(\bbR^n;d^nx)}\leq \|\cM_h\|_{\cB(L^2(\bbR^n;d^nx))}.
\end{equation}
To this end, define the sets
\begin{equation}
A_{\varepsilon}=\{x\in \bbR^n\,|\, |h(x)| \geq \|h\|_{L^{\infty}(\bbR^n;d^nx)}-\varepsilon\},\quad \varepsilon>0.
\end{equation}
It follows from the definition of the essential supremum that
\begin{equation}
\text{$A_{\varepsilon}$ has positive Lebesgue measure for every $\varepsilon>0$}.
\end{equation}
For each $\varepsilon>0$, choose a subset $B_{\varepsilon}\subseteq A_{\varepsilon}$ such that $B_{\varepsilon}$ has positive and finite Lebesgue measure.  Then $\chi_{B_{\varepsilon}}\in L^2((\bbR^n;d^nx))$, $\varepsilon>0$, and
\begin{align}
\|\cM_h \chi_{B_{\varepsilon}}\|_{L^2(\bbR^n;d^nx)}^2 &= \int_{B_{\varepsilon}} d^nx\, |h(x)|^2\no\\
&\geq \big[\|h\|_{L^{\infty}(\bbR^n;d^nx)}-\varepsilon\big]^2 \|\chi_{B_{\varepsilon}}\|_{L^2(\bbR^n;d^nx)}^2,\quad \varepsilon>0.\lb{2.39m}
\end{align}
As a result, $\|\cM_h\|_{\cB(L^2(\bbR^n;d^nx))} \geq \|h\|_{L^{\infty}(\bbR^n;d^nx)}-\varepsilon$ for all $\varepsilon>0$, and \eqref{2.35m} follows.
\end{proof}

If $f\in L^{\infty}(\bbR^n;d^nx)$, then $f(H_0)\in \cB(L^2(\bbR^n;d^nx))$ is given via the Fourier transform by
\begin{equation}\lb{2.39n}
f(H_0) = \cF \cM_{f(|\cdot|^2)} \cF^{-1},
\end{equation}
where $\cM_{f(|\cdot|^2)}$ is the operator of multiplication by $f(|\cdot|^2)\in L^{\infty}(\bbR^n;d^n\xi)$.  (An alternative, equivalent, means of expressing $f(H_0)$ is furnished by the spectral theorem and the functional calculus for self-adjoint operators; see, e.g., \cite[Sect.~VI.5.2]{Ka80} and \cite[Sect.~7.3]{We80}.  The approach via the spectral theorem has the advantage that it applies to arbitrary self-adjoint operators, but in the case of $H_0$, the characterization via the Fourier transform in \eqref{2.39n} is more direct.)

\begin{theorem} \lb{t2.4r}
Let $\alpha \in \bbC$ with $\Re(\alpha)\in (0,\infty)$ and suppose $\lambda \in (0,1]$. Then $(H_0 + \lambda I_{L^2(\bbR^n;d^nx)})^{- \alpha}$ converges in norm resolvent sense to $H_0^{- \alpha}$ as $\lambda \downarrow 0$; that is,
\begin{align}
\begin{split}
& \lim_{\lambda \downarrow 0} \Big\|\big((H_0 + \lambda I_{L^2(\bbR^n;d^nx)})^{- \alpha} - z I_{L^2(\bbR^n;d^nx)}\big)^{-1}    \lb{2.39} \\
& \hspace*{8mm} - \big(H_0^{- \alpha} - z I_{L^2(\bbR^n;d^nx)}\big)^{-1}\Big\|_{\cB(L^2(\bbR^n;d^nx))} = 0, \quad z \in \bbC \backslash S_{\alpha},
\end{split}
\end{align}
where
\begin{align}
S_{\alpha}&=\overline{\bigcup_{\lambda\in[0,1]}\sigma\big((H_0+\lambda I_{L^2(\bbR^n;d^nx)})^{-\alpha}\big)}\no\\
&=\big\{(\mu+\lambda)^{-\alpha} \in \bbC \,\big|\,\mu\in[0,\infty),\,\lambda\in(0,1]\big\}\cup\{0\}\no\\
&=\big\{\mu^{-\alpha} \in \bbC \,\big|\,\mu\in(0,\infty)\big\}\cup\{0\}.
\end{align}
\end{theorem}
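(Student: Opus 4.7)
The plan is to pass to the Fourier-conjugate picture in which both operators become multiplication operators. Since $\cF H_0 \cF^{-1}$ is multiplication by $|\xi|^2$ in $L^2(\bbR^n;d^n\xi)$, the functional calculus identifies $(H_0 + \lambda I_{L^2(\bbR^n;d^nx)})^{-\alpha}$ and $H_0^{-\alpha}$ as $\cF\cM_{f_\lambda}\cF^{-1}$ and $\cF\cM_{f_0}\cF^{-1}$, with (possibly unbounded) symbols $f_\lambda(\xi) = (|\xi|^2+\lambda)^{-\alpha}$ and $f_0(\xi) = |\xi|^{-2\alpha}$. For $z \notin S_\alpha$, the two resolvents are bounded multiplication operators with symbols $g_\lambda := (f_\lambda - z)^{-1}$ and $g_0 := (f_0 - z)^{-1}$, the latter extended by $g_0(0) := 0$. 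Lemma~\ref{l2.3m} reduces \eqref{2.39} to
\[
\lim_{\lambda \downarrow 0}\|g_\lambda - g_0\|_{L^\infty(\bbR^n;d^n\xi)} = 0.
\]

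Since $S_\alpha$ is closed by definition and contains $f_\lambda(\xi)$ for every $\lambda \in [0,1]$ and a.e.~$\xi \in \bbR^n$, the distance $d := \dist(z, S_\alpha) > 0$ yields the uniform bounds $|g_\lambda|, |g_0| \leq d^{-1}$ a.e., and the identity
\[
g_\lambda - g_0 = \frac{f_0 - f_\lambda}{(f_\lambda - z)(f_0 - z)}
\]
is the starting point of every estimate. I would then partition $\bbR^n$ into three zones controlled by parameters $0 < \delta < M$: an outer region $\{|\xi|^2 \geq M\}$, a compact annulus $\{\delta \leq |\xi|^2 \leq M\}$, and an inner region $\{|\xi|^2 \leq \delta\}$. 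On the outer region, $|f_\lambda|, |f_0| \leq M^{-\Re(\alpha)}$ yields $|g_\lambda - g_0| \leq 2 M^{-\Re(\alpha)} d^{-2}$. On the annulus, $\mu \mapsto \mu^{-\alpha}$ is uniformly continuous on the compact interval $[\delta, M+1]$, so $f_\lambda \to f_0$ uniformly there and hence so does $g_\lambda \to g_0$. On the inner region one exploits that $|f_\lambda(\xi)| = (|\xi|^2+\lambda)^{-\Re(\alpha)}$ and $|f_0(\xi)| = |\xi|^{-2\Re(\alpha)}$ are \emph{large}: once these moduli exceed $2|z|$, the reverse triangle inequality gives $|g_\lambda(\xi)| \leq 2(|\xi|^2+\lambda)^{\Re(\alpha)}$ and $|g_0(\xi)| \leq 2|\xi|^{2\Re(\alpha)}$, whence $|g_\lambda - g_0| \leq 4(\delta+\lambda)^{\Re(\alpha)}$.

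Given $\varepsilon > 0$, I would first pick $M$ large enough that $2M^{-\Re(\alpha)} d^{-2} < \varepsilon/3$; next $\delta$ small enough that $4(2\delta)^{\Re(\alpha)} < \varepsilon/3$ and also $(2\delta)^{-\Re(\alpha)} \geq 2|z|$, making the inner-region bound valid throughout $\{|\xi|^2 \leq \delta\}$ once $\lambda \leq \delta$; and finally $\lambda_0 \in (0,\delta]$ so small that the uniform annular convergence contributes at most $\varepsilon/3$ for $\lambda \in (0,\lambda_0]$. Summing the three contributions delivers \eqref{2.39}. I expect the main technical point to be the inner-region estimate: because $\alpha$ is complex, $f_\lambda(\xi)$ rotates in $\bbC$ rather than sitting on the positive reals, so smallness of $g_\lambda$ must be extracted from the \emph{modulus} $(|\xi|^2+\lambda)^{-\Re(\alpha)}$ via reverse triangle inequality, and this is precisely what forces the convention $g_0(0) = 0$ to be consistent with the continuous limit $g_\lambda(0) = (\lambda^{-\alpha} - z)^{-1} \to 0$ as $\lambda \downarrow 0$.
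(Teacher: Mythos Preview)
Your proposal is correct and follows the same overall strategy as the paper---pass to Fourier side, invoke Lemma~\ref{l2.3m}, and reduce to a uniform estimate on a scalar function of $\mu=|\xi|^2$---but the execution differs in one structural point. The paper first uses the second-resolvent identity
\[
(T_1-zI)^{-1}-(T_2-zI)^{-1}=(T_1-z_0I)(T_1-zI)^{-1}\big[(T_1-z_0I)^{-1}-(T_2-z_0I)^{-1}\big](T_2-z_0I)(T_2-zI)^{-1}
\]
to reduce \eqref{2.39} to the single point $z=-1$; it then observes that $f_{\alpha}(\mu)=(\mu^{-\alpha}+1)^{-1}$ (extended by $f_{\alpha}(0)=0$) is continuous on $[0,\infty)$ with $\lim_{\mu\to\infty}f_{\alpha}(\mu)=1$, hence uniformly continuous, and the claim drops out immediately. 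You instead keep $z\in\bbC\backslash S_{\alpha}$ general and prove the requisite uniform smallness of $g_\lambda-g_0$ by hand via a three-zone decomposition (large $|\xi|$, compact annulus, small $|\xi|$). Your argument is essentially a direct proof of the same uniform continuity, with the inner-zone reverse-triangle step playing the role of continuity of $f_{\alpha}$ at $\mu=0$ and the outer-zone step playing the role of the limit at infinity. The paper's route is a bit more economical; yours has the modest advantage of not needing the resolvent-identity reduction and of making the $z$-dependence (through $d=\dist(z,S_\alpha)$) explicit.
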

\begin{proof}
Let $\alpha \in \bbC$ with $\Re(\alpha)\in (0,\infty)$.  The equation $\mu^{-\alpha}=-1$ has no solutions for $\mu\in (0,\infty)$.  Therefore, $-1\in \bbC\backslash S_{\alpha}$.  Recalling the identity (see, e.g., \cite[p.~178]{We80}),
\begin{align}
\begin{split}
& (T_1 - z I_{\cH})^{-1} - (T_2 - z I_{\cH})^{-1} = (T_1 - z_0 I_{\cH}) (T_1 - z I_{\cH})^{-1}  \\
& \quad \times \big[(T_1 - z_0 I_{\cH})^{-1} - (T_2 - z_0 I_{\cH})^{-1}\big](T_2 - z_0 I_{\cH}) (T_2 - z I_{\cH})^{-1},    \\
& \hspace*{7.05cm}  z \in \rho(T_1) \cap \rho(T_2),
\end{split}
\end{align}
where $T_j$, $j=1,2$, are closed operators in a complex, separable Hilbert space $\cH$ with $z_0 \in \rho(T_1) \cap \rho(T_2)$, one may exploit boundedness of the factors $(T_j - z_0 I_{\cH}) (T_j - z I_{\cH})^{-1}= I_{\cH} +(z-z_0) (T_j - z I_{\cH})^{-1}$, $j=1,2$, to conclude that it suffices to take $z=-1$ in \eqref{2.39} without loss of generality. Thus, utilizing unitarity of the Fourier transform and Lemma \ref{l2.3m}, one obtains:
\begin{align}
& \Big\|\big((H_0 + \lambda I_{L^2(\bbR^n;d^nx)})^{- \alpha} + I_{L^2(\bbR^n;d^nx)}\big)^{-1}     \no \\
& \qquad \; \; - \big(H_0^{- \alpha} + I_{L^2(\bbR^n;d^nx)}\big)^{-1}\Big\|_{\cB(L^2(\bbR^n;d^nx))}     \no \\
& \quad =  \Big\|\cF \Big[\big((H_0 + \lambda I_{L^2(\bbR^n;d^nx)})^{- \alpha} + I_{L^2(\bbR^n;d^nx)}\big)^{-1}     \no \\
& \hspace*{1cm} - \big(H_0^{- \alpha} + I_{L^2(\bbR^n;d^nx)}\big)^{-1}\Big] \cF^{-1}\Big\|_{\cB(L^2(\bbR^n;d^nx))}     \no \\
& \quad = \underset{\mu \in (0,\infty)}{\rm ess.sup} \, \Big|\big((\mu + \lambda)^{-\alpha} + 1\big)^{-1} - \big(\mu^{-\alpha} + 1\big)^{-1}\Big|   \no \\
& \quad =  \underset{\mu \in [0,\infty)}{\rm ess.sup} \, |f_{\alpha}(\mu + \lambda) - f_{\alpha}(\mu)|, \quad \lambda \in (0,1], \lb{2.42}
\end{align}
where
\begin{equation}
f_{\alpha}(\mu) = \begin{cases} \big(\mu^{-\alpha} + 1\big)^{-1}, & \mu \in (0,\infty), \\
0, & \mu=0.
\end{cases}
\end{equation}
The function $f_{\alpha}$ is continuous on $[0,\infty)$ since $\mu^{-\alpha}+1\neq 0$ for all $\mu\in (0,\infty)$ and $\lim_{\mu\downarrow 0}f_{\alpha}(\mu)=0=f_{\alpha}(0)$.  In fact, since $\lim_{\mu\to \infty}f_{\alpha}(\mu)=1$, the function $f_{\alpha}$ is actually uniformly continuous on $[0,\infty)$.  In particular,
\begin{quote}
\textit{For every $\varepsilon\in (0,\infty)$, there exists $\delta(\varepsilon)\in(0,1/2)$ such that
\begin{equation}\lb{2.49}
\underset{\mu \in [0,\infty)}{\rm ess.sup}\, |f_{\alpha}(\mu+\lambda)-f_{\alpha}(\mu)|<\varepsilon,\quad \lambda\in(0,\delta(\varepsilon)).
\end{equation}
}
\end{quote}
In consequence,
\begin{equation}\lb{2.50}
\lim_{\lambda\downarrow 0}\Bigg[ \underset{\mu \in [0,\infty)}{\rm ess.sup} \, |f_{\alpha}(\mu + \lambda) - f_{\alpha}(\mu)|\Bigg] = 0,
\end{equation}
and hence \eqref{2.39} with $z=-1$ follows from \eqref{2.42} and \eqref{2.50}.
\end{proof}
\section{Some Properties of Riesz and Bessel Potentials}  \lb{s3}

We  start by recalling some basic facts on $L^p$-properties of Riesz and Bessel potentials (see, e.g., \cite[Theorem 5.9 and Corollary 5.10]{LL01}, \cite[Sect.~4.15]{Ru96}, 
\cite[Sects.~7.1--7.3]{Sa02}, and \cite[Sects.~V.1, V.3]{St70}):
\begin{theorem} \lb{t2.1}
Let $n \in \bbN$, $\alpha \in \bbC$, $\Re(\alpha) \in (0,n/2)$, $x \in \bbR^n$, and introduce the Riesz potential operator $\cR_{\alpha,n}$ as follows: 
\begin{align}
(\cR_{\alpha,n} f)(x) 
& = \gamma_{\alpha,n}^{-1} \int_{\bbR^n} d^n y \, |x - y|^{2\alpha - n} f(y)    \no \\
& = \big(g_{\alpha,n}^{\vee}(0,\dott) * f\big)(x),      \lb{3.1}  \\
& \hspace*{-1.65cm} \gamma_{\alpha,n} = \pi^{n/2} 2^{2\alpha} \Gamma(\alpha)/\Gamma((n/2)-\alpha),     \no
\end{align}
for appropriate functions $f$ $($see below\,$)$. \\[1mm] 
$(i)$ Let $p \in [1,\infty)$ and $f \in L^p(\bbR^n;d^nx)$. Then the integral $(\cR_{\alpha,n} f)(x)$ converges for $($Lebesgue\,$)$ a.e.~$x \in \bbR^n$.   \\[1mm] 
$(ii)$  Let $q\in (1,\infty)$ and $p=p(\alpha,n,q):=nq/[2+2q\Re(\alpha)]$.  There exists a constant $C_{\alpha,n,q,p}\in (0,\infty)$ such that
\begin{equation}
\|\cR_{\alpha,n} f\|_{L^q(\bbR^n;d^nx)} 
\leq C_{\alpha,n,q,p} \|f\|_{L^p(\bbR^n;d^nx)},\quad f\in L^p(\bbR^n;d^nx);  \lb{3.2} 
\end{equation}
in particular, 
\begin{equation}
\cR_{\alpha,n} \in \cB\big(L^p(\bbR^n; d^nx), L^q(\bbR^n; d^nx)\big). 
\end{equation}
$(iii)$  If $\alpha\in (0,n/2)$ and $f\in C_0^{\infty}(\bbR^n)$, then
\begin{align}
\big[|\dott|^{-2\alpha}f^{\wedge}\big]^{\vee}(x) &= \gamma_{\alpha,n}^{-1} \int_{\bbR^n}d^ny\, |x-y|^{2\alpha - n}f(y)\no\\
&= (\cR_{\alpha,n} f)(x),\quad x\in \bbR^n.\lb{3.4}
\end{align}
Moreover, the right-hand side in \eqref{3.4} is a $C^{\infty}(\bbR^n)$ function that decays as $|x|^{2\alpha-n}$ as $|x|\to \infty$.\\[1mm]
$(iv)$  If $\alpha \in (0,n/4)$ and $f\in L^{p(\alpha,n,2)}(\bbR^n;d^nx)$ with $p(\alpha,n,2) := 2n/(n+4\alpha)$, then
\begin{equation}\lb{3.5}
h_f:= \gamma_{\alpha,n}^{-1} |\dott|^{2\alpha-n}\ast f\in L^2(\bbR^n;d^nx)
\end{equation}
and
\begin{equation}\lb{3.6}
|\dott|^{-2\alpha}f^{\wedge} = (h_f)^{\wedge}.
\end{equation}
In particular, if $f\in L^{p(\alpha,n,2)}(\bbR^n;d^nx)$, then
\begin{equation}\lb{3.7}
\big[|\dott|^{-2\alpha}f^{\wedge}\big]^{\vee} = \gamma_{\alpha,n}^{-1} |\dott|^{2\alpha-n}\ast f 
= \cR_{\alpha,n} f \in L^2(\bbR^n;d^nx).
\end{equation}
\end{theorem}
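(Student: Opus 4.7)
The plan is to establish items $(i)$--$(iv)$ sequentially, invoking classical harmonic-analysis results where available and leveraging the Section \ref{s2} framework for the Fourier identifications. For $(i)$, I would fix $f \in L^p(\bbR^n;d^nx)$ and decompose the integration domain as $\bbR^n = B(x,1) \cup B(x,1)^c$. The near contribution is handled by Fubini: since $\Re(\alpha) > 0$, the kernel $|\cdot|^{2\Re(\alpha)-n}$ lies in $L^1(B(0,1))$, so for any $R \in (0,\infty)$,
\begin{equation*}
\int_{|x| \leq R} d^nx \int_{|y-x|\leq 1} |x-y|^{2\Re(\alpha)-n} |f(y)| \, d^ny
\end{equation*}
is finite by Tonelli together with H\"older's inequality applied to the resulting $y$-integral. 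The far contribution is treated by dyadic decomposition into annuli $\{2^k \leq |y-x| < 2^{k+1}\}$; the hypothesis $\Re(\alpha) < n/2$ guarantees convergence of the resulting geometric sum after H\"older. For $(ii)$, inequality \eqref{3.2} is precisely the Hardy--Littlewood--Sobolev inequality for the Riesz kernel $|\cdot|^{2\Re(\alpha)-n}$, and I would simply cite \cite[Theorem~5.9]{LL01} or \cite[Sect.~V.1.2]{St70} to obtain both \eqref{3.2} and the operator-theoretic conclusion $\cR_{\alpha,n} \in \cB(L^p,L^q)$.

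For $(iii)$, the key identification is $g_{\alpha,n}^{\vee}(0;x) = \gamma_{\alpha,n}^{-1}|x|^{2\alpha-n}$, which is built into the small-argument asymptotics \eqref{2.25} for $\alpha \in (0,n/2)$ and appears as the $\lambda \downarrow 0$ limit in Lemma \ref{l2.2}$(iii)$. I would start from the rigorous resolvent-side identity
\begin{equation*}
\bigl[(|\cdot|^2 + \lambda)^{-\alpha} f^\wedge\bigr]^\vee(x) = \bigl(g_{\alpha,n}^\vee(-\lambda;\cdot) \ast f\bigr)(x), \quad \lambda \in (0,\infty), \; x \in \bbR^n,
\end{equation*}
provided by Lemma \ref{lA.2} together with \eqref{2.30}, and then pass $\lambda \downarrow 0$. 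On the right, the Riesz-type pointwise dominant \eqref{2.31} of Lemma \ref{l2.2}$(ii)$ is $\lambda$-independent and integrable against $|f|$ (since $f \in C_0^\infty$), so dominated convergence yields $\gamma_{\alpha,n}^{-1}\int_{\bbR^n} |x-y|^{2\alpha-n} f(y) \, d^ny$. On the left, $(|\xi|^2 + \lambda)^{-\alpha} |f^\wedge(\xi)| \leq |\xi|^{-2\alpha}|f^\wedge(\xi)|$, and the latter is in $L^1(\bbR^n;d^n\xi)$ because $f^\wedge \in \cS(\bbR^n)$ and $2\Re(\alpha) < n$, so dominated convergence again applies and produces $[|\cdot|^{-2\alpha}f^\wedge]^\vee(x)$. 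Smoothness and the decay $|x|^{2\alpha-n}$ of the right-hand side of \eqref{3.4} are then obtained by differentiation under the integral (justified by compact support of $f$) and the asymptotic $|x-y| \sim |x|$ for $|x| \to \infty$ with $y \in \supp(f)$.

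For $(iv)$, I would close by density. Since $C_0^\infty(\bbR^n)$ is dense in $L^{p(\alpha,n,2)}(\bbR^n;d^nx)$, and since $(ii)$ with $q=2$ makes $\cR_{\alpha,n}$ a bounded map $L^{p(\alpha,n,2)} \to L^2$, the identity \eqref{3.7} proved in $(iii)$ on $C_0^\infty$ extends by continuity, provided one also checks that $f \mapsto [|\cdot|^{-2\alpha}f^\wedge]^\vee$ extends continuously in the $L^2$ norm to all of $L^{p(\alpha,n,2)}$; but this is immediate from Plancherel combined with \eqref{3.7} itself on the dense subspace. The principal obstacle of the entire argument is the rigorous Fourier identity underlying $(iii)$, namely $\bigl(\gamma_{\alpha,n}^{-1}|\cdot|^{2\alpha-n}\bigr)^\wedge = |\cdot|^{-2\alpha}$, since neither side lies in $L^1 \cap L^2$; the regularization by Helmholtz resolvents with $\lambda \downarrow 0$, controlled by the dominants supplied in Section \ref{s2}, is the cleanest route and keeps the argument self-contained within the framework already developed.
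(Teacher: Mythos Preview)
The paper does not actually prove Theorem~\ref{t2.1}; it is introduced with the phrase ``We start by recalling some basic facts\ldots'' and is supported only by citations to \cite{LL01}, \cite{Ru96}, \cite{Sa02}, and \cite{St70}. So there is no in-paper proof to compare against, and your sketch goes well beyond what the paper itself supplies. Your strategy for $(iii)$---regularizing by $(|\xi|^2+\lambda)^{-\alpha}$ and passing $\lambda\downarrow 0$ using the pointwise Riesz-type dominant of Lemma~\ref{l2.2}$(ii)$---is particularly attractive here: it keeps the argument internal to the paper's Section~\ref{s2} machinery and mirrors exactly the Bessel-to-Riesz philosophy that the paper advertises as its main contribution. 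The density argument for $(iv)$ is also the natural one.

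There is, however, a genuine gap in your argument for $(i)$. For the far contribution you write that ``the hypothesis $\Re(\alpha)<n/2$ guarantees convergence of the resulting geometric sum after H\"older.'' It does not. On the $k$th annulus H\"older gives at best a factor $2^{k(2\Re(\alpha)-n)}\cdot 2^{kn/p'}\|f\|_{L^p}=2^{k(2\Re(\alpha)-n/p)}\|f\|_{L^p}$, and the geometric series converges only when $\Re(\alpha)<n/(2p)$, which for $p>1$ is strictly stronger than $\Re(\alpha)<n/2$. In fact the unrestricted statement fails: for $p\geq n/(2\Re(\alpha))$ one can choose $f\in L^p$ (e.g., $f(y)\sim|y|^{-\beta}$ at infinity with $n/p<\beta<2\Re(\alpha)$) for which the far integral diverges at every point. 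So either add the hypothesis $p<n/(2\Re(\alpha))$---which is the range implicit in the references the paper cites---or replace the naive dyadic-plus-H\"older estimate by the weak-type bound underlying Hardy--Littlewood--Sobolev.
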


In this context one may {\it naively expect} that $\cR_{\alpha,n}$, $n \in \bbN, \; \alpha \in \bbC, \, \Re(\alpha) \in (0,n/2)$, corresponds to $H_0^{- \alpha}$, but, as will be shown next, the actual details are more intriguing:

The operator $H_0^{-\alpha}$, $\alpha\in \bbR$, is most naturally characterized using the Fourier transform (under which the self-adjoint realization $H_0$ of $-\Delta$, with $\dom(H_0) = W^{2,2}(\bbR^n)$, in $L^2(\bbR^n;d^nx)$, is unitarily equivalent to the maximally defined operator of multiplication by $|\dott|^2$):
\begin{align}\lb{3.8}
&H_0^{-\alpha}f = \big[|\dott|^{-2\alpha}f^{\wedge}\big]^{\vee},\\
&f\in \dom\big(H_0^{-\alpha}\big) = \big\{g\in L^2(\bbR^n;d^nx)\,\big|\, |\dott|^{-2\alpha}g^{\wedge}\in L^2(\bbR^n;d^nx)\big\},\quad \alpha\in \bbR.\no
\end{align}
See also \cite[Sect.~2.2]{CLM20} in this context. 
Items $(iii)$ and $(iv)$ in Theorem \ref{t2.1} illustrate the action of $H_0^{-\alpha}$ as defined by \eqref{3.8} on certain elements in its domain for $\alpha\in(0,n/4)$.  In particular, Theorem \ref{t2.1} $(iv)$ confirms that
\begin{equation}
L^{p(\alpha,n,2)}(\bbR^n;d^nx)\cap L^2(\bbR^n;d^nx) \subseteq \dom\big(H_0^{-\alpha}\big),\quad \alpha \in (0,n/4),
\end{equation}
and that for $f\in L^{p(\alpha,n,2)}(\bbR^n;d^nx)\cap L^2(\bbR^n;d^nx)$,
\begin{align}
\begin{split}
\big(H_0^{-\alpha}f\big)(x) &= \gamma_{\alpha,n}^{-1} \int_{\bbR^n}d^ny\, |x-y|^{2\alpha-n}f(y)\lb{3.10}\\
&= (\cR_{\alpha,n} f)(x) \, \text{ for a.e.~$x\in \bbR^n$},\, \alpha\in (0,n/4).
\end{split} 
\end{align}
In particular, \eqref{3.10} holds if $f\in C_0^{\infty}(\bbR^n)$ when $\alpha\in(0,n/4)$.

\begin{proposition}\lb{p1.2r}
Let $n\in \bbN$.  Then $C_0^{\infty}(\bbR^n)\subset\dom\big(H_0^{-\alpha}\big)$ if and only if $\alpha\in (0,n/4)$. In addition, the analogous statement with the space $C_0^{\infty}(\bbR^n)$ replaced by the Schwartz space $\cS(\bbR^n)$ holds.
\end{proposition}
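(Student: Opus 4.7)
The plan is to work directly from the Fourier-transform characterization of $\dom(H_0^{-\alpha})$ recorded in \eqref{3.8}, which reduces the inclusion $C_0^{\infty}(\bbR^n)\subset \dom(H_0^{-\alpha})$ to deciding when $|\xi|^{-2\alpha}f^{\wedge}\in L^2(\bbR^n;d^n\xi)$ for every $f\in C_0^{\infty}(\bbR^n)$, and then to split the analysis of $\int_{\bbR^n}|\xi|^{-4\alpha}|f^{\wedge}(\xi)|^2 d^n\xi$ into a small-$|\xi|$ and a large-$|\xi|$ regime. Throughout, the standard fact that $f\in C_0^{\infty}(\bbR^n)\subset \cS(\bbR^n)$ implies $f^{\wedge}\in \cS(\bbR^n)$ will be used repeatedly.

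For sufficiency, I would assume $\alpha\in(0,n/4)$ and let $f\in \cS(\bbR^n)$, immediately covering the $C_0^{\infty}$ case. Since $f^{\wedge}$ is Schwartz, it admits a uniform bound $|f^{\wedge}(\xi)|\leq M$ together with rapid decay $|f^{\wedge}(\xi)|\leq C_N(1+|\xi|)^{-N}$ for every $N\in\bbN$ and some $C_N\in(0,\infty)$. Splitting the integral at $|\xi|=1$, the inner piece is controlled by $M^2\int_{|\xi|\leq 1}|\xi|^{-4\alpha}d^n\xi$, which in polar coordinates becomes a constant multiple of $\int_0^1 r^{n-1-4\alpha}dr$ and converges precisely because $4\alpha<n$; the outer piece is dominated by $C_N^2\int_{|\xi|>1}(1+|\xi|)^{-2N}|\xi|^{-4\alpha}d^n\xi$, which converges for any $N>n/2$. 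Hence $f\in \dom(H_0^{-\alpha})$.

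For necessity, suppose $\alpha\geq n/4$ (the relevant regime, the statement being implicitly restricted to $\alpha>0$). Choose any nonnegative $f\in C_0^{\infty}(\bbR^n)$ with $f\not\equiv 0$; then $f^{\wedge}(0)=(2\pi)^{-n/2}\int_{\bbR^n}f(x)d^nx>0$, and by continuity of $f^{\wedge}$ there exist $\delta,c\in(0,\infty)$ with $|f^{\wedge}(\xi)|^2\geq c$ on $|\xi|\leq \delta$. Polar coordinates then yield
\[
\int_{\bbR^n}|\xi|^{-4\alpha}|f^{\wedge}(\xi)|^2 d^n\xi \geq c\,\omega_{n-1}\int_0^{\delta}r^{n-1-4\alpha}\,dr = \infty,
\]
since $n-1-4\alpha\leq -1$. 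Consequently $f\notin \dom(H_0^{-\alpha})$ and the inclusion fails.

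The Schwartz-space analog is then essentially free: the sufficiency argument was phrased for arbitrary $f\in \cS(\bbR^n)$, and $C_0^{\infty}(\bbR^n)\subset \cS(\bbR^n)$ forces the same threshold on the necessity side. I do not expect any step to be a serious obstacle; the argument pivots entirely on the local integrability threshold $s<n$ for $|\xi|^{-s}$ near the origin of $\bbR^n$, with $s=4\alpha$ arising from the presence of $|f^{\wedge}|^2$ in the domain condition, so the critical value $\alpha=n/4$ is exactly where the small-$|\xi|$ integral first fails to converge.
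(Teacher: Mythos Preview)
Your proposal is correct and follows essentially the same approach as the paper: both use the Fourier characterization \eqref{3.8}, handle sufficiency via $f^{\wedge}\in\cS(\bbR^n)$ and the local integrability of $|\xi|^{-4\alpha}$ when $4\alpha<n$, and handle necessity by choosing a nonnegative $f\in C_0^{\infty}(\bbR^n)\setminus\{0\}$ so that $f^{\wedge}(0)>0$ forces divergence of the integral near the origin. Your version is somewhat more explicit (polar coordinates, the split at $|\xi|=1$), but the ideas coincide.
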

\begin{proof}
If $f\in C_0^{\infty}(\bbR^n)$, or, more generally $f \in \cS(\bbR^n)$, then $f^{\wedge}\in \cS(\bbR^n)$, where $\cS(\bbR^n)$ denotes the Schwartz space of rapidly decreasing smooth functions on $\bbR^n$.  Hence, $|\dott|^{-2\alpha}f^{\wedge} \in L^2(\bbR^n; d^nx)$ if $\alpha\in (0,n/4)$.

If $\alpha\in [n/4,\infty)$, then for any nonnegative $f\in C_0^{\infty}(\bbR^n)$, one infers that
\begin{equation}
\|f^{\wedge}\|_{L^{\infty}(\bbR^n)} = f^{\wedge}(0).
\end{equation}
Therefore, $|\dott|^{-2\alpha}f^{\wedge}$ behaves like $\|f^{\wedge}\|_{L^{\infty}(\bbR^n)}|x|^{-2\alpha}$ as $|x|\to 0$, and the latter belongs to $L^2(\bbR^n; d^nx)$ if and only if $\|f^{\wedge}\|_{L^{\infty}(\bbR^n)}=0$; that is, if and only if $f\equiv 0$.  As a result, no sign-definite function in $C_0^{\infty}(\bbR^n)\backslash\{0\}$ belongs to $\dom\big(H_0^{-\alpha}\big)$.
\end{proof}

In this context we also mention that $r_{\alpha}(x) := \gamma_{\alpha,n}^{-1} |x|^{2 \alpha - n}$, $\Re(\alpha) \in (0,n/2)$, $x \in \bbR^n \backslash \{0\}$, satisfies $r_{\alpha} \in \cS^{\prime}(\bbR^n)$ and $r_{\alpha}^{\wedge}\big|_{\bbR^n \backslash \{0\}} \in C^{\infty}(\bbR^n \backslash \{0\})$, where 
\begin{equation}\lb{3.12}
r_{\alpha}^{\wedge}(\xi) = (2\pi)^{-n/2} |\xi|^{- 2\alpha}, \quad \xi \in \bbR^n \backslash \{0\}, 
\end{equation}
see, for instance, \cite[p.~363]{GS64}, \cite[Proposition~4.64]{Mi18}, and 
\cite[Lemmas~2.13 and 2.15]{Sa02}. For analogous facts on the Bessel potential see \cite[Sect.~7.2]{Sa02}. \\[1mm] 
	
Given the result of Proposition \ref{p1.2r}, it is entirely natural to ask the question:

\begin{quote}
\textit{For which values of $\alpha$ is $C_0^{\infty}(\bbR^n)$ an operator core for $H_0^{-\alpha}$?}
\end{quote}
The following sequence of results, Lemma \ref{l3.3} and Corollaries \ref{c3.4} and \ref{c3.5}, completely answers this question, but, as pointed out by the referee, these results boil down to the fact that $C_0^{\infty}(\bbR^n)$ is dense in fractional Sobolev spaces; a thorough discussion of these results can be found, for instance, in \cite[Ch.~7]{Sa02}, in particular, see Lemma~7.15 and Theorem~7.38 therein. For completeness we include the corresponding short proofs.

\begin{lemma} \label{l3.3}
Let $\eta: \bbR^n\rightarrow\C$ be a measurable function and let $M_\eta$ be the maximal operator of multiplication by $\eta$ given by 
\begin{align}
\begin{split} 
& (M_{\eta} f)(\xi) = \eta(\xi) f(\xi) \, \text{ for a.e.~$\xi \in \bbR^n$},     \\	
&f\in \dom(M_\eta)=\big\{g\in L^2(\bbR^n;d^n \xi)\,\big|\, \eta g\in L^2(\bbR^n;d^n \xi)\big\}. 
\end{split} 
\end{align}
Then, any dense subset $\cC$ of $\dom(M_\eta)$ is a core for $M_\eta$. 
\end{lemma}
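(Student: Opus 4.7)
The plan is to read ``dense subset $\cC$ of $\dom(M_\eta)$'' in the only sense for which the conclusion is both meaningful and correct, namely density of $\cC$ in $\dom(M_\eta)$ with respect to the graph norm
\[
\|f\|_{gr}^{2} = \|f\|_{L^2(\bbR^n;d^n\xi)}^{2} + \|M_\eta f\|_{L^2(\bbR^n;d^n\xi)}^{2} = \int_{\bbR^n} d^n\xi \,(1+|\eta(\xi)|^2)|f(\xi)|^2,
\]
which makes $(\dom(M_\eta),\|\cdot\|_{gr})$ isometrically isomorphic to the weighted Hilbert space $L^2(\bbR^n;(1+|\eta|^2)d^n\xi)$. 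With this setup the target is the operator identity $\overline{M_\eta|_{\cC}} = M_\eta$, which is the definition of $\cC$ being a core.

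The substantive preliminary step, without which the whole framework collapses, is to verify that $M_\eta$ is a closed operator in $L^2(\bbR^n;d^n\xi)$. I would argue this directly: if $(f_k)\subset \dom(M_\eta)$ satisfies $f_k\to f$ and $\eta f_k\to g$ in $L^2(\bbR^n;d^n\xi)$, then passing to a common a.e.-convergent subsequence (the point at which measurability of $\eta$ enters) forces $\eta f = g$ almost everywhere, and hence $\eta f \in L^2(\bbR^n;d^n\xi)$, so that $f\in \dom(M_\eta)$ with $M_\eta f = g$. This closedness is what endows $\dom(M_\eta)$ with a genuine Hilbert structure under the graph norm and, just as importantly, identifies the graph of $\overline{M_\eta|_{\cC}}$ with the closure of the graph of $M_\eta|_{\cC}$ inside $L^2(\bbR^n;d^n\xi)\oplus L^2(\bbR^n;d^n\xi)$.

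With closedness in hand, I would finish as follows. Given $f\in \dom(M_\eta)$, the graph-norm density hypothesis supplies a sequence $(c_k)\subset \cC$ with $\|c_k - f\|_{gr}\to 0$; unpacking the graph norm, this is precisely the pair of $L^2$-convergences $c_k\to f$ and $M_\eta c_k = \eta c_k \to \eta f = M_\eta f$. Consequently $(f,M_\eta f)$ lies in the $L^2\oplus L^2$-closure of the graph of $M_\eta|_{\cC}$, giving $M_\eta \subseteq \overline{M_\eta|_{\cC}}$. The reverse inclusion $\overline{M_\eta|_{\cC}}\subseteq M_\eta$ is automatic since $M_\eta|_{\cC}\subseteq M_\eta$ and $M_\eta$ is closed. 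The two inclusions together yield $\overline{M_\eta|_{\cC}} = M_\eta$, so $\cC$ is a core for $M_\eta$.

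The main obstacle is conceptual rather than technical: the argument must separate the two pieces of content that are easily confused with the bare definition of a core. First, closedness of $M_\eta$ is a real theorem about the multiplication operator (not a definition) and it is what makes the weighted-$L^2$ picture of $\dom(M_\eta)$ valid. Second, the passage from graph-norm density to the operator identity $\overline{M_\eta|_{\cC}} = M_\eta$ proceeds through the $L^2\oplus L^2$-closure of graphs, which requires closedness in an essential way. Presenting both steps explicitly ensures the proof is more than a restatement of the definition of a core; no further analytic input (e.g., truncation in $\{|\eta|\le N\}$ or spectral considerations) should be needed under the graph-norm interpretation.
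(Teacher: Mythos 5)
Your argument is internally sound, but it proves a different statement from the one the paper proves and actually uses. The paper reads ``dense subset of $\dom(M_\eta)$'' as a subspace of $\dom(M_\eta)$ that is dense in $L^2(\bbR^n;d^n\xi)$, not dense in the graph norm: this is unambiguous from the way Lemma \ref{l3.3} is invoked in Corollaries \ref{c3.4} and \ref{c3.5}, where the only property verified is $L^2$-density of $C_0^\infty(\bbR^n\backslash\{0\})$, resp.\ of $\cF C_0^\infty(\bbR^n)$. Under your graph-norm reading the lemma reduces, as you essentially concede, to the definition of a core plus the (easy, but worth recording) closedness of $M_\eta$; it then carries no information that would let those corollaries go through, since graph-norm density is precisely what would remain to be checked there. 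The paper's own route is an adjoint computation: setting $M_{\eta,0}:=M_\eta\upharpoonright_{\cC}$, it shows $M_{\eta,0}^*=M_{\bar{\eta}}$ (the inclusion $M_{\bar{\eta}}\subseteq M_{\eta,0}^*$ being automatic, and the reverse inclusion using density of $\cC$ to identify $M_{\eta,0}^*f$ with $\bar{\eta}f$), and similarly $M_{\bar{\eta},0}^*=M_\eta$, whence $\overline{M_{\eta,0}}=M_{\eta,0}^{**}=M_{\bar{\eta}}^*=M_\eta$. So your proposal bypasses, rather than supplies, the substantive step.

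That said, your instinct that the $L^2$-density version deserves scrutiny is well founded, and the right response would have been to flag it rather than to silently reinterpret the hypothesis. The crux of the paper's argument is the implication ``$\int_{\bbR^n}d^n\xi\,\overline{(\bar{\eta}f-\tilde{f})}\,g=0$ for all $g\in\cC$, with $\cC$ dense, forces $\bar{\eta}f=\tilde{f}$,'' which is delicate because $\bar{\eta}f$ is not known a priori to lie in $L^2$. For an arbitrary $L^2$-dense subspace of $\dom(M_\eta)$ the conclusion can in fact fail: for $n=1$ and $\eta(\xi)=\xi$, the graph-orthogonal complement of $h(\xi)=(1+\xi^2)^{-5/4}$ inside $\dom(M_\eta)$ is the kernel of the functional $g\mapsto\int_{\bbR}(1+\xi^2)^{-1/4}g(\xi)\,d\xi$, which is $L^2$-unbounded (its would-be representer $(1+\xi^2)^{-1/4}$ is not square integrable), so that kernel is $L^2$-dense in $\dom(M_\eta)$ and hence in $L^2(\bbR;d\xi)$, yet it is graph-closed and proper and therefore not a core. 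The subspaces appearing in the corollaries do work, but for a reason neither your tautology nor the bare density statement captures, namely their extra structure (for instance stability under the truncations $\chi_{\{|\eta|\le N\}}$, or a direct graph-norm approximation argument), and that is the content a complete proof at this point must supply.
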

\begin{proof}
First, note that $M_\eta^*=M_{\ol{\eta}}$ and $M_{\ol{\eta}}^*=M_\eta$. Now, define $M_{{\eta},0}:=M_{{\eta}}\upharpoonright_\cC$. We claim that $M_{\eta,0}^*=M_{\ol{\eta}}$. Since $M_{\eta,0} \subseteq M_\eta$, it immediately follows that $M_{\ol{\eta}} \subseteq M_{\eta,0}^*$. Now, suppose that $f\in\cD(M_{\eta,0}^*)$, which means that there exists $\tilde{f}\in L^2(\bbR^n;d^n \xi)$ such that for all $g\in\cC$:
\begin{equation}
\begin{split}
&( f,M_{\eta,0}g)_{L^2(\bbR^n;d^nx)}  = \big(\tilde{f},g\big)_{L^2(\bbR^n;d^n \xi)}\\[1mm] 
&\quad \text{if and only if } \, \int_{\bbR^n}d^n \xi\, \ol{\left(f(\xi)\ol{\eta(\xi)}-\tilde{f}(\xi) \right)}g(\xi)=0.
\end{split}
\end{equation}
Since $\cC$ is dense, this implies $\ol{\eta} f\in L^2(\bbR^n;d^n \xi)$. Consequently, $f\in\cD(M_{\ol{\eta}})$ and $M_{\eta,0}^*f=M_{\ol{\eta}}f$. This shows $M_{\eta,0}^* \subseteq M_{\ol{\eta}}$ and therefore $M_{\eta,0}^*=M_{\ol{\eta}}$. Similarly, it can be shown that $M_{\ol{\eta},0}^*=M_\eta$. Therefore, one obtains $\ol{M_{\eta,0}}=M_{\eta,0}^{**}=M_\eta$, which proves that $\cC$ is a core for $M_\eta$.		
\end{proof}

Recalling the function $r_{\alpha}^{\wedge}$ in \eqref{3.12} one obtains the following result: 

\begin{corollary}\lb{c3.4}
For any $\alpha\in\C$, $C_0^\infty(\bbR^n \backslash \{0\})$ is a core for $M_{r_{\alpha}^{\wedge}}$.	
\end{corollary}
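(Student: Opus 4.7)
The plan is to invoke Lemma \ref{l3.3} with $\cC = C_0^\infty(\bbR^n \backslash \{0\})$ and $\eta(\xi) = r_\alpha^\wedge(\xi) = (2\pi)^{-n/2}|\xi|^{-2\alpha}$; it then suffices to show that $C_0^\infty(\bbR^n \backslash \{0\}) \subseteq \dom(M_{r_\alpha^\wedge})$ and is dense there in the graph norm
\begin{equation*}
\|g\|_\Gamma^2 = \|g\|_{L^2(\bbR^n;d^n\xi)}^2 + \|M_{r_\alpha^\wedge} g\|_{L^2(\bbR^n;d^n\xi)}^2.
\end{equation*}
The inclusion is immediate: for every $\varphi \in C_0^\infty(\bbR^n \backslash \{0\})$, the function $|\xi|^{-2\alpha}\varphi(\xi)$ is bounded with compact support in $\bbR^n \backslash \{0\}$, hence lies in $L^2(\bbR^n;d^n\xi)$.

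For density, given $f \in \dom(M_{r_\alpha^\wedge})$, I would approximate $f$ in two steps: first a truncation that excises both the origin and the region at infinity, then a mollification. Fix cutoffs $\chi_k \in C_0^\infty(\bbR^n \backslash \{0\})$ with $0 \le \chi_k \le 1$, $\chi_k \equiv 1$ on $\{1/k \le |\xi| \le k\}$, and $\supp(\chi_k) \subseteq \{1/(2k) \le |\xi| \le 2k\}$. Then $\chi_k f \to f$ pointwise almost everywhere, with the dominating bounds $|\chi_k f| \le |f|$ and $\bigl||\xi|^{-2\alpha}\chi_k f\bigr| \le \bigl||\xi|^{-2\alpha} f\bigr|$; Lebesgue's dominated convergence theorem therefore yields $\chi_k f \to f$ in the graph norm.

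For the mollification step, fix $k$: then $\chi_k f \in L^2(\bbR^n;d^n\xi)$ is supported in a compact set $K_k \subset \bbR^n \backslash \{0\}$. Let $\phi_\varepsilon$ be a standard smooth mollifier with $\supp(\phi_\varepsilon) \subseteq B(0,\varepsilon)$. Whenever $\varepsilon < \tfrac{1}{4}\dist(K_k,\{0\})$, the convolution $\chi_k f \ast \phi_\varepsilon$ belongs to $C_0^\infty(\bbR^n \backslash \{0\})$ and is supported in a slight enlargement $\widetilde K_k \subset \bbR^n \backslash \{0\}$, on which $|\xi|^{-2\Re(\alpha)}$ is bounded by some $C_k \in (0,\infty)$. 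Consequently,
\begin{equation*}
\|M_{r_\alpha^\wedge}(\chi_k f \ast \phi_\varepsilon - \chi_k f)\|_{L^2(\bbR^n;d^n\xi)}
\le (2\pi)^{-n/2} C_k \|\chi_k f \ast \phi_\varepsilon - \chi_k f\|_{L^2(\bbR^n;d^n\xi)},
\end{equation*}
and both this expression and the plain $L^2$ norm on the right tend to $0$ as $\varepsilon \downarrow 0$ by the standard $L^2$-approximation property of mollifiers. A diagonal extraction $\chi_k f \ast \phi_{\varepsilon_k}$ with $\varepsilon_k$ chosen small enough then produces the required approximating sequence in $C_0^\infty(\bbR^n \backslash \{0\})$. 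There is no substantive analytic obstacle here: the truncation alone removes the lone singular point $\xi = 0$ as well as the large-$\xi$ region, so on the supports of all subsequent approximants the multiplier $|\xi|^{-2\alpha}$ is bounded, and the possibly complex exponent $\alpha$ plays no role beyond $\Re(\alpha)$ controlling the size of that bound.
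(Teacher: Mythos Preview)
Your argument is correct, but you are misreading Lemma~\ref{l3.3} and thereby doing far more work than the paper does. Lemma~\ref{l3.3} asserts that any subset $\cC\subseteq\dom(M_\eta)$ that is dense in $L^2(\bbR^n;d^n\xi)$---not dense in the graph norm---is already a core for $M_\eta$; the adjoint computation in its proof is exactly what upgrades mere $L^2$-density to the core property. Accordingly, the paper's proof of the corollary is one line: $C_0^\infty(\bbR^n\backslash\{0\})$ is contained in $\dom(M_{r_\alpha^\wedge})$ (your first observation) and is dense in $L^2(\bbR^n;d^n\xi)$, so Lemma~\ref{l3.3} applies immediately. Under your reading of the lemma (graph-norm density required), Lemma~\ref{l3.3} would be a tautology, since for a closed operator ``core'' \emph{means} graph-norm dense in the domain.

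That said, your truncation-plus-mollification argument is a valid, self-contained direct proof that $C_0^\infty(\bbR^n\backslash\{0\})$ is graph-norm dense in $\dom(M_{r_\alpha^\wedge})$, and so it establishes the corollary without needing Lemma~\ref{l3.3} at all. Your route is more hands-on and avoids the adjoint machinery, at the price of the explicit two-step approximation; the paper's route is shorter precisely because Lemma~\ref{l3.3} absorbs that work once and for all for arbitrary measurable multipliers.
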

\begin{proof}
This follows immediately from Lemma \ref{l3.3} using fact that $C_0^\infty(\bbR^n \backslash \{0\})$ is dense in $L^2(\bbR^n;d^n \xi)$ and a subset of $\cD(M_{r_{\alpha}^{\wedge}})$ for any $\alpha\in\C$.
\end{proof}

\begin{corollary}\lb{c3.5}
If $\alpha\in \C$ with $\Re(\alpha)\in(0,n/4)$, then $C_0^\infty(\bbR^n)$ is a core for $H_0^{-\alpha}$.
\end{corollary}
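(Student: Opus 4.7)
The plan is to convert the statement into one about a multiplication operator via the Fourier transform and then invoke the abstract Lemma \ref{l3.3}. Under the unitary Fourier transform $\cF$, the defining equation \eqref{3.8} shows that $H_0^{-\alpha}$ is unitarily equivalent to the maximal operator of multiplication by the function $\eta(\xi) := |\xi|^{-2\alpha}$ in $L^2(\bbR^n; d^n\xi)$, which we denote by $M_\eta$. Hence $C_0^\infty(\bbR^n)$ is a core for $H_0^{-\alpha}$ if and only if its Fourier image
\[
\cC := \big\{f^\wedge \,\big|\, f \in C_0^\infty(\bbR^n)\big\}
\]
is a core for $M_\eta$.

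I would next verify the two hypotheses required to apply Lemma \ref{l3.3} to $\cC$ and the above $\eta$: namely, that (a)~$\cC \subseteq \dom(M_\eta)$, and (b)~$\cC$ is dense in $L^2(\bbR^n; d^n\xi)$. Assertion (a) is precisely the content of Proposition \ref{p1.2r} transported to the Fourier side; its proof extends immediately to complex $\alpha$ with $\Re(\alpha) \in (0,n/4)$, since square-integrability of $|\xi|^{-2\alpha} f^\wedge(\xi)$ near the origin depends only on $\Re(\alpha)$, while $f^\wedge \in \cS(\bbR^n)$ takes care of the behavior at infinity. Assertion (b) is immediate from the density of $C_0^\infty(\bbR^n)$ in $L^2(\bbR^n;d^nx)$ together with the unitarity of $\cF$.

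With (a) and (b) in hand, Lemma \ref{l3.3} yields that $\cC$ is a core for $M_\eta$, whence the unitary equivalence delivers that $C_0^\infty(\bbR^n)$ is a core for $H_0^{-\alpha}$. No step presents a real obstacle: the technical content has already been absorbed into Lemma \ref{l3.3} (which produces cores for maximal multiplication operators out of any subset of their domain that is dense in $L^2$) and Proposition \ref{p1.2r} (which, in the regime $\Re(\alpha) \in (0,n/4)$, places $C_0^\infty(\bbR^n)$ inside $\dom(H_0^{-\alpha})$); the corollary is essentially the packaging of these two ingredients.
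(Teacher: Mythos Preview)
Your proposal is correct and follows essentially the same route as the paper: conjugate $H_0^{-\alpha}$ by the Fourier transform to the maximal multiplication operator by $|\xi|^{-2\alpha}$, check that the Fourier image $\cF C_0^\infty(\bbR^n)$ lies in the domain (via the same observation underlying Proposition \ref{p1.2r}) and is dense in $L^2$ (via unitarity of $\cF$ and density of $C_0^\infty(\bbR^n)$), and then apply Lemma \ref{l3.3}. The paper's proof differs only cosmetically, writing the domain inclusion as $\cF C_0^\infty(\bbR^n)\subset\cS(\bbR^n)\subset\dom(M_{r_\alpha^\wedge})$ and spelling out the density via an orthogonality argument.
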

\begin{proof}
By taking Fourier transforms, one infers that $C_0^\infty(\bbR^n)$ is a core for $H_0^{-\alpha}$ if and only if $\cF C_0^\infty(\bbR^n) = \big\{g^{\wedge} \in \cS(\bbR^n)\,\big|\, g\in C_0^\infty(\bbR^n)\big\}$ is a core for $M_{r_{\alpha}^{\wedge}}$. Since 
\begin{equation} 			
\cF C_0^\infty(\bbR^n)\subset\cS(\bbR^n)\subset \dom(M_{r_{\alpha}^{\wedge}}), 
\end{equation} 
the conclusion of Corollary \ref{c3.5} will follow from Lemma \ref{l3.3} upon showing that $\cF C_0^\infty(\bbR^n)$ is dense in $L^2(\bbR^n;d^n \xi)$. This can be shown as follows: if $f\in \cF C_0^\infty(\bbR^n)^\perp$, then, since 
\begin{equation} 
0= \big(f,g^\wedge\big)_{L^2(\bbR^n;d^n \xi)} = \big( f^\vee,g\big)_{L^2(\bbR^n;d^n \xi)} \, \text{ for every $g\in C_0^\infty(\bbR^n)$}, 
\end{equation} 
it follows that $f^\vee=0$, implying $f=0$.
\end{proof}

Typically, when dealing with $H_0^{-\alpha}$ and $\cR_{\alpha,n}$, one also involves the Lizorkin space 
\begin{equation}
\cL(\bbR^n) = \bigg\{f \in \cS(\bbR^n) \, \bigg| \, \int_{\bbR^n} d^n x \, x^{m} f(x) = 0 \, \text{for all $m \in \bbN_0^n$}\bigg\},
\end{equation}
and its Fourier transform,
\begin{align}
\begin{split} 
\cL^{\wedge}(\bbR^n) &= \big\{f^{\wedge} \in \cS(\bbR^n) \, \big| \, \big(\partial^m f^{\wedge}\big)(0) = 0\, \text{for all $m \in \bbN_0^n$}\big\},    \\
&= \cF \cL(\bbR^n) \supset C_0^{\infty}(\bbR^n \backslash \{0\}),
\end{split} 
\end{align}
employing the fact,
\begin{equation}
\int_{\bbR^n} d^n x \, x^m f(x) = i^{-|m|} \int_{\bbR^n} d^n x \, (i x)^m f(x) e^{i x \cdot 0} = i^{-|m|} \big(\partial^m f^{\wedge}\big)(0), \quad f \in \cS(\bbR^n).
\end{equation}
In particular,
\begin{equation}
f_0^{\wedge} (\xi) = e^{- |\xi|^2 - |\xi|^{-2}}, \; \xi \in \bbR^n, \, \text{ satisfies } \, f_0^{\wedge} \in \cL^{\wedge}(\bbR^n). 
\end{equation}
Here we use the multi-index notation for $m = (m_1,\dots,m_n) \in \bbN_0^n$,
\begin{align}
\begin{split}
& x^m = x_1^{m_1} \cdots x_n^{m_n}, \quad x =(x_1,\dots,x_n) \in \bbR^n,    \\
& |m| = m_1 + \cdots + m_n,      \\
& \partial^m = \partial_1^{m_1} \cdots \partial_n^{m_n}, \quad \partial_j = \partial/\partial x_j, \; 1\leq j \leq n.
\end{split}
\end{align}

Then $|\dott|^{-2\alpha}$ leaves $\cL^{\wedge}(\bbR^n)$ invariant and one confirms
\begin{equation}\lb{3.21}
\cR_{\alpha,n} \cR_{\beta,n} f = \cR_{\alpha+\beta,n} f, \quad f \in \cL(\bbR^n) = \cF^{-1} \cL^{\wedge}(\bbR^n);
\end{equation}
in particular, \eqref{3.21} holds for $f \in \cF^{-1} C_0^{\infty}(\bbR^n \backslash \{0\})$. For additional details in this context, see, for instance \cite[Ch.~2]{Sa02}, \cite[\S~25]{SKM93}.

While we are exclusively focused on negative powers of the Laplacian, that is, $H_0^{-\alpha}$, $\Re(\alpha) \in (0,n/2)$, we refer, for instance, to \cite{CLM20}, \cite{Kw17}, and \cite[Ch.~3]{Sa02} for an overview regarding positive powers of the Laplacian, $H_0^{\beta}$, $\beta \in (0,1)$ (and more generally, for complex $\beta$). 

In sharp contrast to the case of the Riesz potential operator, the analogous considerations for the Bessel potential operator are much simpler since $(H_0 + \lambda I_{L^2(\bbR^n;d^nx)})^{-\alpha} \in \cB\big(L^2(\bbR^n;d^nx)\big)$, $\Re(\alpha) \in (0,\infty)$. In particular, $H_0 + \lambda I_{L^2(\bbR^n;d^nx)}$ is an elliptic operator of positive-type as detailed in \cite[\S~16]{KZPS76}, and hence its fractional powers are well-understood beyond the use of the spectral theorem:

\begin{theorem} \lb{t3.2}
Let $n \in \bbN$, $\alpha \in \bbC$, $\Re(\alpha) \in (0,\infty)$, $z \in \bbC \backslash [0,\infty)$, $x \in \bbR^n$, and introduce the Bessel potential operator $\cB_{\alpha,n,z}$ as follows: 
\begin{align}
(\cB_{\alpha,n,z} f)(x) 
& = \eta_{\alpha,n}^{-1} (-z)^{(n-2\alpha)/4} \int_{\bbR^n} d^n y \, |x - y|^{\alpha - (n/2)}    \no \\
& \quad \times K_{(n/2)-\alpha} \big((-z)^{1/2}|x-y|\big) f(y)     \no \\ 
& = \big(g_{\alpha,n}^{\vee}(z,\dott) * f\big)(x),      \lb{3.22} \\
& \hspace*{-1.8cm}  \eta_{\alpha,n} = (2 \pi)^{n/2} 2^{\alpha-1} \Gamma(\alpha),      \no   
\end{align} 
for appropriate functions $f$. In addition, assuming $p \in [1,\infty) \cup \{\infty\}$ and 
$f \in L^p(\bbR^n;d^nx)$, the integral $(\cB_{\alpha,n,z} f)(x)$ converges for $($Lebesgue\,$)$ 
a.e.~$x \in \bbR^n$ and 
\begin{equation}
\|\cB_{\alpha,n,z} f\|_{L^p(\bbR^n;d^nx)} \leq  
\big\|g_{\alpha,n}^{\vee}(z,\dott)\big\|_{L^1(\bbR^n;d^nx)} \|f\|_{L^p(\bbR^n;d^nx)}.    \lb{3.23} 
\end{equation}
In particular,
\begin{align}
\begin{split} 
\|\cB_{\alpha,n,z}\|_{\cB(L^p(\bbR^n;d^nx))} \leq  
\big\|g_{\alpha,n}^{\vee}(z,\dott)\big\|_{L^1(\bbR^n;d^nx)},&     \lb{3.24} \\
\Re(\alpha) \in (0,\infty), \; z \in \bbC \backslash [0,\infty),&
\end{split}
\end{align}
and, if $\alpha \in (0,\infty)$, $z=-\lambda$, $\lambda \in (0,\infty)$, 
\begin{equation}
\big\|g_{\alpha,n}^{\vee}(-\lambda,\dott)\big\|_{L^1(\bbR^n;d^nx)}=(2\pi)^{-n/2}g_{\alpha,n}(-\lambda,0)=(2\pi)^{-n/2}(-\lambda)^{-\alpha}.   \lb{3.25}
\end{equation}
\end{theorem}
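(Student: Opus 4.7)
The plan is to exploit the fact that $\cB_{\alpha,n,z}$ is, by its very construction, the operator of convolution with the kernel $g_{\alpha,n}^{\vee}(z,\dott)$; combining this with the already established $L^1$-integrability $g_{\alpha,n}^{\vee}(z,\dott) \in L^1(\bbR^n;d^nx)$ from \eqref{2.27}, both the a.e.\ convergence claim and the $L^p$ boundedness assertions \eqref{3.23} and \eqref{3.24} reduce to an application of Young's convolution inequality.

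First I would verify, by substituting the explicit value $\eta_{\alpha,n} = (2\pi)^{n/2} 2^{\alpha-1}\Gamma(\alpha)$ and comparing with the defining formula \eqref{2.5} for $g_{\alpha,n}^{\vee}$, that the integral on the right-hand side of \eqref{3.22} is indeed $(g_{\alpha,n}^{\vee}(z,\dott) \ast f)(x)$. Then, setting $g := g_{\alpha,n}^{\vee}(z,\dott) \in L^1(\bbR^n;d^nx)$ and taking $f \in L^p(\bbR^n;d^nx)$, I would apply Young's convolution inequality in the form $\|g \ast h\|_{L^p} \le \|g\|_{L^1}\|h\|_{L^p}$ to the nonnegative auxiliary function $F(x) := \int_{\bbR^n} d^n y\,|g(x-y)||f(y)|$. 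This shows $F \in L^p(\bbR^n;d^nx)$ and hence $F(x) < \infty$ for Lebesgue-a.e.\ $x \in \bbR^n$; this simultaneously produces the a.e.\ absolute convergence of the defining integral in \eqref{3.22} and, from the pointwise bound $|(g \ast f)(x)| \le F(x)$, the $L^p$ estimate \eqref{3.23} in the range $p \in [1,\infty)$. The endpoint $p = \infty$ is immediate from the pointwise estimate $|(g \ast f)(x)| \le \|g\|_{L^1} \|f\|_{L^\infty}$, which is valid at every Lebesgue point of $f$. Taking the supremum over the unit ball in $L^p(\bbR^n;d^nx)$ then yields the operator norm bound \eqref{3.24}.

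For the final identity \eqref{3.25} I would reason as follows. Specializing the representation \eqref{2.36} to $z = -\lambda$ with $\lambda \in (0,\infty)$ and $\alpha \in (0,\infty)$ exhibits $g_{\alpha,n}^{\vee}(-\lambda;x)$ as a positive function, since the integrand $e^{-\lambda/t}e^{-|x|^2 t/4} t^{(n/2)-\alpha-1}$ is strictly positive on $(0,\infty)$. Consequently $\|g_{\alpha,n}^{\vee}(-\lambda,\dott)\|_{L^1(\bbR^n;d^nx)} = \int_{\bbR^n}d^n x\,g_{\alpha,n}^{\vee}(-\lambda;x)$, which may be evaluated by two equivalent routes: either directly, by interchanging the $x$- and $t$-integrations in \eqref{2.36} via Fubini--Tonelli, computing the Gaussian integral $\int_{\bbR^n}d^n x\,e^{-|x|^2 t/4} = (4\pi/t)^{n/2}$, and recognizing the remaining $t$-integral as $\lambda^{-\alpha}\Gamma(\alpha)$ through the substitution $u = \lambda/t$; or, more compactly, by appealing to \eqref{2.6} evaluated at $\xi = 0$ (which gives $g_{\alpha,n}(-\lambda;0) = \lambda^{-\alpha}$) together with the Fourier convention \eqref{A.1}, which relates the integral of a positive $L^1$ function to the value of its Fourier transform at the origin.

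There is no substantive obstacle in this argument; it is essentially a mechanical combination of the convolution representation built into the definition of $\cB_{\alpha,n,z}$ with the $L^1$-integrability of the Bessel kernel established in Section~\ref{s2}. The only point demanding care is in the final identity \eqref{3.25}, where the multiplicative constants and the Fourier normalization factor must be tracked consistently with the conventions fixed in the paper.
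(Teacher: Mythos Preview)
Your proposal is correct and follows essentially the same route as the paper: both arguments identify $\cB_{\alpha,n,z}$ as convolution with $g_{\alpha,n}^{\vee}(z,\dott)\in L^1(\bbR^n;d^nx)$, invoke Young's inequality for \eqref{3.23} and \eqref{3.24}, and obtain \eqref{3.25} from the positivity of $g_{\alpha,n}^{\vee}(-\lambda,\dott)$ together with the relation between $\int g_{\alpha,n}^{\vee}$ and the Fourier transform $g_{\alpha,n}(-\lambda;0)$. The only substantive addition in the paper's version is that it also cites Theorem~\ref{tA.2} to record that equality actually holds in \eqref{3.24} when $\alpha,\lambda\in(0,\infty)$; this is not part of the theorem statement itself, so your argument is complete as written.
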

\begin{proof} By \eqref{2.30}, the Bessel potential operator $\cB_{\alpha,n,z}$  coincides with  the $- \alpha$-th power of the Helmholtz Laplacian $(- \Delta - z I_{L^2(\bbR^n;d^nx)})^{-\alpha}$ in $L^2(\bbR^n;d^nx)$,
\begin{equation}
\cB_{\alpha,n,z} = (H_0 - z I_{L^2(\bbR^n;d^nx)})^{- \alpha}, \quad \Re(\alpha) \in (0,\infty), \, z \in \bbC \backslash [0,\infty).
\end{equation}

Inequality \eqref{3.23} is Young's inequality \eqref{A.3} with $p=1$, $q=r=2$, $g = g_{\alpha,n}^{\vee}$, $h = f$. An application of Theorem \ref{tA.2} shows that, actually, equality holds in \eqref{3.23}, \eqref{3.24} for $\alpha \in (0,\infty)$, $z = - \lambda \in (-\infty,0)$, that is, 
\begin{align}
\begin{split} 
\|\cB_{\alpha,n,-\lambda}\|_{\cB(L^p(\bbR^n;d^nx))} = \big\|g_{\alpha,n}^{\vee}(-\lambda,\dott)\big\|_{L^1(\bbR^n;d^nx)},& \\
\alpha \in (0,\infty), \; \lambda \in (0,\infty).&
\end{split}
\end{align}
The fact \eqref{3.25} then follows from $g_{\alpha,n}^{\vee}(-\lambda,\dott)>0$ if $\alpha \in (0,\infty)$, $\lambda \in (0,\infty)$.	
\end{proof}

\section{Recovering the Riesz Composition Formula \\ from a Bessel Analog}  \lb{s4}

The fact that 
\begin{equation}
\begin{split}
(H_0 -z I_{L^2(\bbR^n;d^nx)})^{-\alpha} (H_0 -z I_{L^2(\bbR^n;d^nx)})^{-\beta} = (H_0 -z I_{L^2(\bbR^n;d^nx)})^{-\alpha - \beta},&     \lb{4.1} \\ 
\Re(\alpha), \Re(\beta), \in (0,\infty), \; z \in \bbC \backslash [0,\infty),&
\end{split}
\end{equation}
combined with \eqref{2.5} and \eqref{2.29} thus instantly yields the following result, a Bessel (resp., Helmholtz) analog of the Riesz composition formula (see \eqref{4.6}):

\begin{theorem} \lb{t3.3}
Suppose that $n\in \bbN$. Then for $\alpha, \beta \in \bbC$, $\Re(\alpha), \Re(\beta) \in (0,\infty)$, 
$z \in \bbC \backslash [0,\infty)$, 
\begin{align}
\begin{split} 
& \int_{\R^n} d^n x' \, \frac{K_{(n/2)-\alpha}\big((-z)^{1/2}|x-x'|\big) K_{(n/2)-\beta}\big((-z)^{1/2}|x'-y|\big)}{|x-x'|^{(n/2)-\alpha} |x'-y|^{(n/2)-\beta}}     \\ 
& \quad = \frac{\kappa_{\alpha,\beta,n}}{(-z)^{n/4}}
\frac{K_{(n/2)-\alpha-\beta}\big((-z)^{1/2}|x-y|\big)}{|x-y|^{(n/2)-\alpha-\beta}}, \quad x, y, \in \R^n, \;  x \neq y,    \lb{4.2}
\end{split} 
\end{align}
where
\begin{equation} \lb{4.3}
\kappa_{\alpha,\beta,n}  
=\frac{(2 \pi)^{n/2} 2^{-1} \Gamma(\alpha)\Gamma(\beta)}{\Gamma(\alpha+\beta)}.
\end{equation}
\end{theorem}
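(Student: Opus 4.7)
The plan is to extract Theorem \ref{t3.3} from the operator semigroup identity \eqref{4.1} by passing to the level of integral kernels. By \eqref{2.29}--\eqref{2.30}, each operator $(H_0 - zI_{L^2(\bbR^n;d^nx)})^{-\gamma}$ with $\Re(\gamma) \in (0,\infty)$ acts as convolution with the $L^1$-kernel $g_{\gamma,n}^{\vee}(z;\dott)$ (see \eqref{2.27}). Composing as in \eqref{4.1} and invoking either associativity of $L^1$-convolution, or, equivalently and perhaps more transparently, the convolution theorem combined with Lemma \ref{lA.2} (using $(|\xi|^2-z)^{-\alpha}(|\xi|^2-z)^{-\beta} = (|\xi|^2-z)^{-\alpha-\beta}$), immediately yields the a.e.\ kernel identity
\begin{equation*}
\bigl(g_{\alpha,n}^{\vee}(z;\dott) \ast g_{\beta,n}^{\vee}(z;\dott)\bigr)(u) = g_{\alpha+\beta,n}^{\vee}(z;u) \, \text{ for a.e.\ } u \in \bbR^n.
\end{equation*}

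Next I would upgrade this to a pointwise identity on $\bbR^n \backslash \{0\}$ so it can be evaluated at $u = x-y$. For each fixed $u \neq 0$, the exponential decay \eqref{2.22} handles the tails of the convolution integrand, while the local singularities at $x' = 0$ and $x' = u$ are locally integrable in every admissible regime: when $\Re(\alpha), \Re(\beta) \in (0,n/2)$ the pointwise Riesz-type bound of Lemma \ref{l2.2}$(ii)$ applies, and when $\Re(\alpha)$ or $\Re(\beta)$ equals or exceeds $n/2$ the short-distance asymptotics \eqref{2.25} yield only logarithmic or bounded behavior. A dominated-convergence argument then shows that both sides are continuous functions of $u \in \bbR^n \backslash \{0\}$, promoting the a.e.\ identity to the desired pointwise one.

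Finally I would substitute the explicit form \eqref{2.5}, rewritten as $g_{\gamma,n}^{\vee}(z;u) = \eta_{\gamma,n}^{-1}(-z)^{(n-2\gamma)/4} |u|^{\gamma-n/2} K_{(n/2)-\gamma}\bigl((-z)^{1/2}|u|\bigr)$, into this pointwise identity and rearrange. The ratio of prefactors collapses to $\eta_{\alpha,n} \eta_{\beta,n}/\eta_{\alpha+\beta,n} = \kappa_{\alpha,\beta,n}$ by \eqref{1.8}, while the surviving powers of $(-z)$ telescope to $(-z)^{-n/4}$, producing exactly the right-hand side of \eqref{4.2}. There is no serious conceptual obstacle: all the analytic heavy lifting is already packaged in Sections \ref{s2}--\ref{s3}, and only the promotion of the a.e.\ identity to pointwise equality (via the $z$-uniform majorant of Lemma \ref{l2.2}$(ii)$) requires a moment's care; the remainder is algebraic bookkeeping.
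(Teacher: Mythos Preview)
Your proposal is correct and follows exactly the paper's approach: the paper's entire ``proof'' is the sentence preceding the theorem, which states that \eqref{4.1} combined with \eqref{2.5} and \eqref{2.29} ``instantly yields'' the result. You have simply unpacked this one-line argument, including the (strictly speaking necessary, but glossed over in the paper) upgrade from an a.e.\ convolution identity to a pointwise one on $\bbR^n\backslash\{0\}$ via continuity; there is nothing to correct or contrast.
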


We call \eqref{4.2} a Bessel composition formula as fractional powers of the Laplacian, $-\Delta$, in connection with the Riesz potential and the Riesz composition formula \eqref{1.1}, are now replaced by those of $- \Delta -z$, $z \in \bbC \backslash [0,\infty)$ in connection with the Bessel potential. Formula \eqref{4.2} is mentioned, for instance, in \cite[p.~338]{Am22}, \cite[eq.~(2.5)]{AMS63}, \cite[eq.~(4, 7)]{AS61}, \cite[p.~135]{St70}.  The next result, the celebrated Riesz composition formula, is proved in detail by du Plessis  \cite[Theorem 3.1]{Du70}. We present an elementary and short argument based on the Bessel composition formula next. Indeed, taking $\lambda \downarrow 0$ in \eqref{4.2} and using the limiting form for $K_{\nu}$, 
\begin{equation} \lb{4.5}
K_{\nu}(\zeta) \underset{\zeta \rightarrow 0}{\sim}  
2^{-1} \Gamma(\nu) (\zeta/2)^{-\nu}, \quad \Re(\nu) > 0   
\end{equation}
(see, e.g., \cite[No.~9.6.9]{AS72}), one can now easily recover the Riesz composition formula for all relevant values of $\alpha$ and $\beta$ from the Bessel composition formula \eqref{4.2}.  In particular, one recovers the constant $k_{\alpha,\beta,n}$ in \eqref{4.7}, a task which, otherwise, requires some work (cf. \cite[Sect.~3.1, Lemmas~1--4]{Du70}), see however, Remark \ref{r4.4}. 

\begin{theorem}[The Riesz Composition Formula] \lb{t2.2}
Suppose that $n\in \bbN$. Then for $\alpha, \beta \in \bbC$, $\Re(\alpha), \Re(\beta), \Re(\alpha+\beta) \in (0, n/2)$, 
\begin{equation} \lb{4.6}
\int_{\R^n}  d^n x^{\p} \, |x-x^{\p}|^{2\alpha-n}|x^{\p}-y|^{2\beta-n} 
= k_{\alpha,\beta,n}|x-y|^{2\alpha+2\beta-n}, \quad x, y \in \bbR^n, \; x \neq y,
\end{equation}
where
\begin{equation} \lb{4.7}
k_{\alpha,\beta,n}=\pi^{n/2}\frac{\Gamma(\alpha)\Gamma(\beta)\Gamma((n/2)-\alpha-\beta)}
{\Gamma(\alpha+\beta)\Gamma((n/2)-\alpha)\Gamma((n/2)-\beta)} = \f{\gamma_{\alpha,n} \gamma_{\beta,n}}{\gamma_{\alpha + \beta,n}},
\end{equation}
and 
\begin{equation}
\gamma_{\alpha,n} = \pi^{n/2} 2^{2 \alpha} \Gamma(\alpha)/\Gamma((n/2)-\alpha).     \lb{4.8} 
\end{equation}
\end{theorem}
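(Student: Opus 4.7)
The plan is to deduce Theorem \ref{t2.2} directly from the Bessel composition formula \eqref{4.2} by setting $z=-\lambda$ with $\lambda\in(0,\infty)$ and then passing to the limit $\lambda \downarrow 0$ via Lebesgue's dominated convergence theorem. The first step is to recast \eqref{4.2} in the compact form
\begin{equation*}
\int_{\bbR^n} g_{\alpha,n}^{\vee}(-\lambda;x-x')\, g_{\beta,n}^{\vee}(-\lambda;x'-y)\, d^n x' = g_{\alpha+\beta,n}^{\vee}(-\lambda;x-y),
\end{equation*}
which one obtains by multiplying both sides of \eqref{4.2} by $\eta_{\alpha,n}^{-1}\eta_{\beta,n}^{-1}\lambda^{(n-2\alpha)/4}\lambda^{(n-2\beta)/4}$, invoking the definition \eqref{2.5} and the factorization $\kappa_{\alpha,\beta,n}=\eta_{\alpha,n}\eta_{\beta,n}/\eta_{\alpha+\beta,n}$ recorded in \eqref{1.8}. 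This identity is valid for all $x\neq y$, $\lambda>0$, and $\alpha,\beta\in\bbC$ with $\Re(\alpha),\Re(\beta)\in(0,\infty)$.

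The second step is the passage to the limit $\lambda\downarrow 0$ under the integral sign. Restricting to the strip $\Re(\alpha),\Re(\beta),\Re(\alpha+\beta)\in(0,n/2)$ assumed in Theorem \ref{t2.2}, Lemma \ref{l2.2}$(iii)$ supplies the pointwise limit of the integrand for each $x'\in\bbR^n\backslash\{x,y\}$,
\begin{equation*}
g_{\alpha,n}^{\vee}(-\lambda;x-x')\, g_{\beta,n}^{\vee}(-\lambda;x'-y) \xrightarrow[\lambda\downarrow 0]{} \gamma_{\alpha,n}^{-1}\gamma_{\beta,n}^{-1}|x-x'|^{2\alpha-n}|x'-y|^{2\beta-n},
\end{equation*}
while Lemma \ref{l2.2}$(ii)$ furnishes a $\lambda$-independent dominant of the form $M_{\alpha,\beta,n}|x-x'|^{2\Re(\alpha)-n}|x'-y|^{2\Re(\beta)-n}$ for a finite constant $M_{\alpha,\beta,n}$. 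I would then verify integrability of this dominant in $x'$: the singularities at $x'=x$ and $x'=y$ are locally integrable because $2\Re(\alpha)-n,\,2\Re(\beta)-n > -n$, and the decay at infinity, which behaves like $|x'|^{2\Re(\alpha+\beta)-2n}$, is integrable precisely due to the hypothesis $\Re(\alpha+\beta)<n/2$. Applying dominated convergence on the left-hand side and Lemma \ref{l2.2}$(iii)$ (now with $\alpha$ replaced by $\alpha+\beta$, which is legitimate since $\Re(\alpha+\beta)\in(0,n/2)$) on the right-hand side yields
\begin{equation*}
\gamma_{\alpha,n}^{-1}\gamma_{\beta,n}^{-1}\int_{\bbR^n}|x-x'|^{2\alpha-n}|x'-y|^{2\beta-n}\,d^n x' = \gamma_{\alpha+\beta,n}^{-1}|x-y|^{2\alpha+2\beta-n}.
\end{equation*}
Rearranging and invoking the identity $k_{\alpha,\beta,n}=\gamma_{\alpha,n}\gamma_{\beta,n}/\gamma_{\alpha+\beta,n}$ from \eqref{1.2} gives \eqref{4.6}--\eqref{4.7}.

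The sole analytical obstacle is the justification of dominated convergence, but the pointwise Riesz-type bound provided by Lemma \ref{l2.2}$(ii)$ is tailored precisely for this application, so the remaining work is only the routine integrability verification above. A pleasant byproduct of this route is that the explicit constant $k_{\alpha,\beta,n}$ emerges automatically from $\kappa_{\alpha,\beta,n}$ together with the small-argument asymptotics of $K_\nu$ encoded in Lemma \ref{l2.2}$(iii)$, thereby bypassing the substantially more laborious direct evaluation of this constant carried out in \cite[Sect.~3.1, Lemmas~1--4]{Du70}.
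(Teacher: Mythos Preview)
Your proposal is correct and follows essentially the same route as the paper's own proof: start from the Bessel composition formula \eqref{4.2} with $z=-\lambda$, invoke the Riesz-type pointwise bound of Lemma~\ref{l2.2}\,$(ii)$ as the dominant, use Lemma~\ref{l2.2}\,$(iii)$ (equivalently, the asymptotics \eqref{4.5}) for the pointwise limit, and apply dominated convergence. Your explicit verification of the integrability of the dominant (local integrability at $x'=x,y$ and decay at infinity under $\Re(\alpha+\beta)<n/2$) is a welcome elaboration of a step the paper leaves implicit.
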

\begin{proof} 
Fixing $n\in \bbN\backslash\{1\}$, Theorem \ref{t3.3} implies 
\begin{align}
&\int_{\R^n} d^n x^{\p} \, \lambda^{(n-\alpha-\beta)/2} 
\frac{K_{(n/2)-\alpha}\big(\lambda^{1/2}|x-x^{\p}|\big) K_{(n/2)-\beta}\big(\lambda^{1/2}|x^{\p}-y|\big)}{|x-x^{\p}|^{(n/2)-\alpha} |x^{\p}-y|^{(n/2)-\beta}}    \no\\
&\quad=\kappa_{\alpha,\beta,n}\lambda^{(n-2\alpha-2\beta)/4}\frac{K_{(n/2)-\alpha-\beta}\big(\lambda^{1/2}|x-y|\big)}{|x-y|^{(n/2)-\alpha-\beta}},      \lb{4.9} \\
& \hspace*{2.45cm} \alpha, \beta, \lambda \in (0,\infty), \; x,y \in \R^n, \;   x \neq y.    \no
\end{align}
Restricting $\Re(\alpha), \Re(\beta), \Re(\alpha + \beta) \in (0,n/2)$, and using the estimate \eqref{2.31}, an application of Lebesgue's dominated convergence theorem together with the  
$\lambda\downarrow 0$ limiting behavior \eqref{4.5} in \eqref{4.9} then yields \eqref{4.6}--\eqref{4.8}. 
\end{proof}

\begin{remark}\lb{r3.5a}
One notes that the limit $z \to 0$, $z \in S_{\varepsilon}$, in \eqref{2.36} yields the integral, 
\begin{equation}
\int_0^{\infty} dt \, e^{-|x|^2 t/4} t^{[(n-2)/2 -\alpha]},
\end{equation}
and hence necessitates the condition $\alpha \in (0, n/2)$, and, analogously, necessitates 
$\beta \in (0, n/2)$ in Theorem \ref {t2.2}. Finiteness of the integral in \eqref{4.6} as $|x'| \to \infty$ then also requires 
$\alpha + \beta \in (0, n/2)$. \hfill $\diamond$
\end{remark}

\begin{remark}\lb{r4.4}
Following the kind suggestion by the anonymous referee, we now sketch an alternative and elementary proof of the Riesz composition formula: One starts with the identity,  
\begin{equation}
\Gamma((n/2) - \alpha) = \int_0^{\infty} ds \, s^{(n/2) - \alpha -1} e^{-s}, \quad 0 < \Re(\alpha) < n/2,
\end{equation}
makes the substitution $s = |x|^2/(4t)$, $x \in \bbR^n \backslash \{0\}$, $t \in (0,\infty)$, and obtains
\begin{equation}
\Gamma((n/2) - \alpha) |x|^{2 \alpha - n} = 4^{\alpha - (n/2)} \int_0^{\infty} dt \, t^{\alpha - (n/2) - 1} 
e^{- |x|^2/(4t)}, \quad 0 < \Re(\alpha) < n/2. 
\end{equation}
Introducing the Gauss--Weierstrass integral kernel
\begin{equation}
W(x,t) = (4\pi t)^{-n/2} e^{-|x|^2/(4t)}, \quad x \in \bbR^n, \; t \in (0,\infty),
\end{equation}
this results in 
\begin{align}
\begin{split}
\gamma_{\alpha,n}^{-1} |x|^{2\alpha -n} = \f{\Gamma((n/2)-\alpha)}{4^{\alpha} \pi^{n/2}  \Gamma(\alpha)} |x|^{2\alpha -n} = \f{1}{\Gamma(\alpha)} \int_0^{\infty} dt \, t^{\alpha-1} W(x,t),& 
\lb{4.13} \\ 
0 < \Re(\alpha) < n/2, \; x \in \bbR^n \backslash \{0\}.&
\end{split}
\end{align}
Identity \eqref{4.13} is also called {\it Bochner's subordination formula} for the Riesz potential kernel, see \cite[Theorem~2.34]{Sa02}. 

Next, introduce 
\begin{equation}
I= \gamma_{\alpha,n}^{-1} \gamma_{\beta,n}^{-1}\int_{\bbR^n} d^n y \, |y|^{2\alpha-n} |x-y|^{2\beta-n}      \quad \alpha, \beta \in (0,n/2). 
\end{equation}
By \eqref{4.13} and Fubini's theorem for nonnegative functions one obtains
\begin{align}
I &= \f{1}{\Gamma(\alpha)\Gamma(\beta)} \int_0^{\infty} dt \int_0^{\infty} ds \, t^{\alpha-1} t^{\beta-1} 
(W(\dott,t) * W(\dott,s))(x)    \no \\
&= \f{1}{\Gamma(\alpha)\Gamma(\beta)} \int_0^{\infty} dt \int_0^{\infty} ds \, t^{\alpha-1} t^{\beta-1} 
W(x,t+s),
\end{align}
employing $(W(\dott,t) * W(\dott,s))(x) = W(x,t+s)$. Substituting $t = u v$, $s = u(1-v)$, $u \in (0,\infty)$, $v \in (0,1)$, one arrives at
\begin{align}
I &=  \f{1}{\Gamma(\alpha)\Gamma(\beta)} \int_0^{\infty} u du \int_0^1 dv \, (uv)^{\alpha-1} [u(1-v)]^{\beta-1} W(x,u)     \no \\
&=  \f{1}{\Gamma(\alpha)\Gamma(\beta)} \int_0^{\infty} du \, u^{\alpha+\beta-1} W(x,u) \int_0^1 dv \, 
v^{\alpha-1} (1-v)^{\beta-1}    \no \\ 
&=  \f{1}{\Gamma(\alpha+\beta)} \int_0^{\infty} du \, u^{\alpha+\beta-1} W(x,u) \no \\
&= \gamma_{\alpha+\beta}^{-1} |x|^{2\alpha+2\beta-n}, \quad \alpha, \beta \in (0,n/2), \; \alpha + \beta \in (0,n/2),    \no 
\end{align}
employing the $\beta$ function integral and then Bochner's subordination formula \eqref{4.13} once more. Thus,  one recovers 
the Riesz composition formula. 

This computation extends to $\Re(\alpha), \Re(\beta) \in (0,n/2)$, $\Re(\alpha) + \Re(\beta) \in (0,n/2)$ using Fubini's theorem for signed functions. 

This argument is mentioned in Johnson \cite{Jo73} without proof. 
\hfill $\diamond$
\end{remark}

\begin{remark}\lb{r4.5}
The computation for the Riesz composition formula in Remark \ref{r4.4} also works for the Bessel composition formula (cf.\ \eqref{1.11} for the underlying operator version), in fact, this was derived earlier by Flett \cite{Fl71} before the analogous computation for the Riesz kernel. In this context one starts with the representation \eqref{2.37} 
\begin{align}
\begin{split} 
g_{\alpha,n}^{\vee}(z;x) = \f{1}{\Gamma(\alpha)} \int_0^{\infty} dt \, t^{\alpha_1} e^{zt} W(x,t),&    \\
\Re(\alpha), -\Re(z) \in (0,\infty), \; x \in \bbR^n \backslash \{0\},&  
\end{split} 
\end{align}
a Bochner subordination formula for the Bessel potential kernel. Then analogous computations as in Remark \ref{r4.4} yield 
\begin{align}
\begin{split}
\int_{\bbR^n} d^y \, g_{\alpha,n}^{\vee}(z;y) g_{\alpha,n}^{\vee}(z;x-y) = g_{\alpha,n}^{\vee}(z;x),&    \\
\Re(\alpha), -\Re(z) \in (0,\infty), \; x \in \bbR^n \backslash \{0\}.&     \lb{4.17}
\end{split}
\end{align}
But in this case simply focusing on the integral kernels of either side of the bounded operator identity \eqref{1.11} appear to be the quickest argument leading to \eqref{4.17}. 
\hfill $\diamond$
\end{remark}

\appendix
\section{Some Background on Convolution-type Operators} \lb{sA}

We start by denoting the unitary Fourier transform and its inverse in $L^2(\bbR^n;d^nx)$ by 
\begin{align}
(\cF f)(\xi) \equiv f^{\wedge}(\xi) = (2 \pi)^{-n/2} \LIM_{R\rightarrow \infty} \int_{|x| \leq R} d^n x \, 
f(x) e^{- i x \cdot \xi}, \quad f \in L^2(\bbR^n;d^nx),     \no \\
\big(\cF^{-1} g\big)(x) \equiv g^{\vee}(x) = (2 \pi)^{-n/2} \LIM_{R\rightarrow \infty} \int_{|\xi| \leq R} 
d^n \xi \, g(\xi) e^{i \xi \cdot x}, \quad g \in L^2(\bbR^n;d^n \xi),      \lb{A.1}
\end{align}
where $\LIM_{R\rightarrow \infty}$ denotes the limit in the norm of $L^2(\bbR^n;d^n \xi)$, respectively, in $L^2(\bbR^n;d^nx)$.

At this point we recall the following result from \cite[Theorem~IX.29]{RS75}:

\begin{theorem} \lb{tA.1} 
Assume $g \in L^{\infty}(\bbR^n;d^nx)$ and suppose that either \\[1mm] 
$(i)$ $g \in L^2(\bbR^n;d^nx)$, \\[1mm] 
or, \\[1mm] 
$(ii)$ $g^{\vee} \in L^1(\bbR^n;d^nx)$. \\[1mm] 
Then, in either case, one introduces 
\begin{equation}
(g(- i \nabla) f)(x) := (2 \pi)^{- n/2} \int_{\bbR^n} d^n y \, g^{\vee}(x-y) f(y),   
\quad f \in L^2(\bbR^n;d^nx),    \lb{A.2} 
\end{equation}
where the integral on the right-hand side of \eqref{A.2} converges for all $x \in \bbR^n$ in case $(i)$ 
and for $($Lebesgue\,$)$ a.e.~$x \in \bbR^n$ in case $(ii)$. 
\end{theorem}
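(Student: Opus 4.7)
The plan is to handle the two cases separately, since they rest on different classical inequalities: Case (i) is a direct application of Cauchy--Schwarz (after Plancherel), while Case (ii) requires the standard $L^1\ast L^2 \subseteq L^2$ form of Young's convolution inequality, combined with Tonelli's theorem for the measurability and a.e.~finiteness of the convolution.

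For Case (i), I would observe that $g \in L^2(\bbR^n;d^nx) \cap L^{\infty}(\bbR^n;d^nx)$ implies $g^{\vee} \in L^2(\bbR^n;d^nx)$ by the Plancherel (unitarity) of the Fourier transform $\cF$ in \eqref{A.1}. Since Lebesgue measure is translation and reflection invariant, for every fixed $x \in \bbR^n$ the function $y \mapsto g^{\vee}(x-y)$ belongs to $L^2(\bbR^n;d^ny)$ with norm equal to $\|g^{\vee}\|_{L^2(\bbR^n;d^nx)}$. A direct application of the Cauchy--Schwarz inequality then gives
\begin{equation*}
\int_{\bbR^n} d^n y \, |g^{\vee}(x-y) f(y)| \leq \|g^{\vee}\|_{L^2(\bbR^n;d^nx)} \|f\|_{L^2(\bbR^n;d^nx)} < \infty
\end{equation*}
for every $x \in \bbR^n$, yielding absolute convergence of the integral in \eqref{A.2} pointwise everywhere. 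This case is essentially routine.

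For Case (ii), define the nonnegative measurable function $h(x) := \int_{\bbR^n} d^n y \, |g^{\vee}(x-y)| |f(y)|$, where measurability (in the extended sense) is assured by Tonelli's theorem applied to the nonnegative integrand $(x,y) \mapsto |g^{\vee}(x-y)||f(y)|$. The key step is the Cauchy--Schwarz trick behind Young's inequality: writing $|g^{\vee}(x-y)| = |g^{\vee}(x-y)|^{1/2} \cdot |g^{\vee}(x-y)|^{1/2}$ and applying Cauchy--Schwarz in the $y$-variable yields
\begin{equation*}
h(x)^2 \leq \|g^{\vee}\|_{L^1(\bbR^n;d^nx)} \int_{\bbR^n} d^n y \, |g^{\vee}(x-y)| |f(y)|^2.
\end{equation*}
Integrating in $x$ and using Tonelli once more (together with translation invariance of $d^n x$) produces $\|h\|_{L^2(\bbR^n;d^nx)}^2 \leq \|g^{\vee}\|_{L^1(\bbR^n;d^nx)}^2 \|f\|_{L^2(\bbR^n;d^nx)}^2 < \infty$. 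Hence $h \in L^2(\bbR^n;d^nx)$, which forces $h(x) < \infty$ for Lebesgue a.e.~$x \in \bbR^n$; this is precisely the desired absolute convergence of the integral in \eqref{A.2} almost everywhere.

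The main obstacle, modest as it is, is Case (ii): one must resist the temptation to expect pointwise convergence for every $x$, since $g^{\vee} \in L^1$ alone does not prevent the integrand from failing to be integrable on a null set; the a.e.~finiteness is precisely the content of Young's inequality and no improvement is possible at this level of generality. Case (i), by contrast, is pointwise for all $x$ because membership of $g^{\vee}$ in $L^2$ pairs cleanly with $f \in L^2$ via Cauchy--Schwarz, which involves no exceptional null sets.
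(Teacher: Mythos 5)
Your proposal is correct. Note that the paper itself offers no proof of this statement: Theorem \ref{tA.1} is quoted verbatim from Reed--Simon \cite[Theorem~IX.29]{RS75}, so there is no in-paper argument to compare against. Your two-case treatment is the standard one and is sound: in case $(i)$, Plancherel gives $g^{\vee}\in L^2(\bbR^n;d^nx)$ and Cauchy--Schwarz yields absolute convergence of the integral in \eqref{A.2} for \emph{every} $x\in\bbR^n$ (for any fixed representative of $g^{\vee}$); in case $(ii)$, your splitting $|g^{\vee}|=|g^{\vee}|^{1/2}\cdot|g^{\vee}|^{1/2}$ followed by Cauchy--Schwarz and Tonelli is precisely the textbook proof of the $L^1\ast L^2\subseteq L^2$ instance of Young's inequality (cf.\ \eqref{A.3} with $p=1$, $q=r=2$), and membership of $|g^{\vee}|\ast|f|$ in $L^2(\bbR^n;d^nx)$ correctly forces finiteness, hence absolute convergence, for Lebesgue a.e.\ $x$. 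Your closing remark that one cannot upgrade case $(ii)$ to convergence for every $x$ is also accurate at this level of generality.
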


In this context it is convenient to also recall Young's (convolution) inequality (see, e.g., \cite[p.~29]{RS75}),
\begin{align}
\begin{split} 
& \|g * h\|_{L^r(\bbR^n;d^nx)} \leq \|g\|_{L^p(\bbR^n;d^nx)} \|h\|_{L^q(\bbR^n;d^nx)},   \lb{A.3} \\
& \, g\in L^p(\bbR^n;d^nx), \; h \in L^q(\bbR^n;d^nx), \; 1 \leq p, q, r \leq \infty, \, p^{-1} + q^{-1} = 1 + r^{-1}.  
\end{split}
\end{align} 

Here $f\ast g$ denotes the convolution of the functions $f$ and $g$ defined by (see, e.g., \cite[Sect.~2.15]{LL01} and \cite[p.~6]{RS75})
\begin{equation}\lb{A.4}
(f\ast g)(x) = \int_{\bbR^n}d^ny\, f(x-y)g(y)\,\text{ for a.e.~$x\in \bbR^n$},
\end{equation}
provided that $f\in L^p(\bbR^n;d^nx)$ and $g\in L^q(\bbR^n;d^nx)$ for some $p,q\in [1,\infty)$ such that $p^{-1}+q^{-1}\geq 1$.

If $p\in [1,\infty)$ and $g\in L^1(\bbR^n;d^nx)$ are fixed, then the operator in $L^p(\bbR^n;d^nx)$ given by convolution with the function $g$,
\begin{equation}
A_gf = f\ast g,\quad f\in L^p(\bbR^n;d^nx),
\end{equation}
is a bounded operator by Young's inequality \eqref{A.3} (with $q=1$ and $r=p$) and one obtains the following estimate for the operator norm:
\begin{equation}\lb{A.6}
\|A_g\|_{\cB(L^p(\bbR^n;d^nx))} \leq \|g\|_{L^1(\bbR^n;d^nx)}.
\end{equation}
If $g$ is nonnegative a.e.~on $\bbR^n$, then equality actually holds in \eqref{A.6}. We did not find a statement of the following (likely, well-known) result in the literature, so we include its proof for completeness.
		
\begin{theorem}\lb{tA.2}
Let $n\in \bbN$, $p\in (1,\infty)\cup\{\infty\}$, and $g\in L^1(\bbR^n;d^nx)$ with $g\geq 0$ a.e.~on $\bbR^n$.  The operator $A_g:L^p(\bbR^n;d^nx)\to L^p(\bbR^n;d^nx)$ defined by
\begin{equation}
A_g f = f\ast g,\quad f\in L^p(\bbR^n;d^nx),
\end{equation}
is bounded and
\begin{equation}\lb{A.8}
\|A_g\|_{\cB(L^p(\bbR^n;d^nx))} = \|g\|_{L^1(\bbR^n;d^nx)}.
\end{equation}
In particular, $\|A_g\|_{\cB(L^p(\bbR^n;d^nx))}$ does not depend on $p\in (1,\infty)\cup\{\infty\}$.
\end{theorem}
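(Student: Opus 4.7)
The plan is to establish the operator norm equality by proving matching upper and lower bounds. The upper bound $\|A_g\|_{\cB(L^p(\bbR^n;d^nx))} \leq \|g\|_{L^1(\bbR^n;d^nx)}$ has already been recorded in \eqref{A.6} as an immediate consequence of Young's inequality with $q=1$, $r=p$, so the real content of Theorem \ref{tA.2} lies in the reverse inequality
\begin{equation*}
\|A_g\|_{\cB(L^p(\bbR^n;d^nx))} \geq \|g\|_{L^1(\bbR^n;d^nx)}.
\end{equation*}
This is where the nonnegativity hypothesis $g\geq 0$ a.e.\ will enter in an essential way.

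For $p = \infty$ the argument is straightforward: the constant function $f\equiv 1$ lies in $L^{\infty}(\bbR^n;d^nx)$ with $\|f\|_{L^{\infty}(\bbR^n;d^nx)} = 1$, and a direct computation gives $(A_g f)(x) = \int_{\bbR^n}d^ny\, g(y) = \|g\|_{L^1(\bbR^n;d^nx)}$ for every $x\in\bbR^n$, whence $\|A_g f\|_{L^{\infty}(\bbR^n;d^nx)} = \|g\|_{L^1(\bbR^n;d^nx)}$.

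For $p \in (1,\infty)$ I would test $A_g$ on the characteristic functions $f_R = \chi_{B(0,R)}$ of balls of radius $R>0$ centered at the origin. Writing $\omega_n$ for the Lebesgue measure of the unit ball in $\bbR^n$, one has $\|f_R\|_{L^p(\bbR^n;d^nx)}^p = \omega_n R^n$ and $(A_g f_R)(x) = \int_{B(x,R)} d^ny\, g(y)$. Given $\varepsilon > 0$, use $g\in L^1(\bbR^n;d^nx)$ to select $M=M(\varepsilon)>0$ large enough that $\int_{B(0,M)} d^ny\, g(y) \geq \|g\|_{L^1(\bbR^n;d^nx)} - \varepsilon$. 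The key geometric observation is that $B(0,M) \subset B(x,R)$ whenever $|x|\leq R-M$; combined with $g\geq 0$ a.e., this yields $(A_g f_R)(x) \geq \|g\|_{L^1(\bbR^n;d^nx)} - \varepsilon$ for a.e.\ $x\in B(0,R-M)$. Integrating the $p$-th power over $B(0,R-M)$ leads to
\begin{equation*}
\frac{\|A_g f_R\|_{L^p(\bbR^n;d^nx)}}{\|f_R\|_{L^p(\bbR^n;d^nx)}} \geq \big(\|g\|_{L^1(\bbR^n;d^nx)} - \varepsilon\big) \bigg(\frac{R-M}{R}\bigg)^{n/p}.
\end{equation*}
Sending $R\to\infty$ with $\varepsilon$ fixed, and then letting $\varepsilon\downarrow 0$, produces the desired lower bound and completes the proof.

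There is no real analytic obstacle here; the argument is essentially bookkeeping. The only mildly delicate point is the geometric step of capturing almost all of the $L^1$ mass of $g$ inside a ball of finite radius $M$, which is precisely what allows the test functions $f_R$ to mimic constants on sets of comparable volume to their support. This is also the mechanism through which sharpness of Young's convolution inequality for nonnegative $L^1$ kernels becomes visible.
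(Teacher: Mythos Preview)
Your proof is correct and follows essentially the same approach as the paper's own proof: both establish the lower bound by testing $A_g$ on characteristic functions of large sets (you use balls $B(0,R)$, the paper uses hypercubes $[-j,j]^n$), exploiting the geometric inclusion that captures most of the $L^1$ mass of $g$ on a shrunken copy of the support, and then letting the scale tend to infinity. The only cosmetic differences are the choice of test sets and that the paper normalizes its test functions up front and separates out the trivial case $\|g\|_{L^1(\bbR^n;d^nx)}=0$, whereas you implicitly assume $\varepsilon<\|g\|_{L^1(\bbR^n;d^nx)}$.
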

\begin{proof}
If $p=\infty$, then Young's inequality \eqref{A.3} implies
\begin{equation}\lb{A.9}
\|A_gf\|_{L^{\infty}(\bbR^n;d^nx)} \leq \|g\|_{L^1(\bbR^n;d^nx)}\|f\|_{L^{\infty}(\bbR^n;d^nx)},\quad f\in L^{\infty}(\bbR^n;d^nx).
\end{equation}
Equality holds in \eqref{A.9} for $f=\chi_{\bbR^n}\in L^{\infty}(\bbR^n;d^nx)$ (here and throughout $\chi_{\Omega}$ denotes the characteristic function of a set $\Omega\subseteq \bbR^n$) as
\begin{align}
\|A_g\chi_{\bbR^n}\|_{L^{\infty}(\bbR^n;d^nx)} &= \underset{x\in \bbR^n}{\rm ess.sup} \, \bigg|\int_{\bbR^n}d^ny\, \chi_{\bbR^n}(x-y)g(y)\bigg|\\
&= \int_{\bbR^n}d^ny\, g(y) = \|g\|_{L^1(\bbR^n;d^nx)}.\no
\end{align}
Since $\|\chi_{\bbR^n}\|_{L^{\infty}(\bbR^n;d^nx)}=1$, \eqref{A.8} follows for $p=\infty$.

Next, let $p\in (1,\infty)$.  If $f\in L^p(\bbR^n;d^nx)$, then Young's inequality \eqref{A.3} implies $A_g f\in L^p(\bbR^n;d^nx)$ and
\begin{equation}
\|A_gf\|_{L^p(\bbR^n;d^nx)} = \|f \ast g\|_{L^p(\bbR^n;d^nx)} \leq \|g\|_{L^1(\bbR^n;d^nx)}\|f\|_{L^p(\bbR^n;d^nx)}.
\end{equation}
Therefore, $A_g$ is bounded with
\begin{equation}\lb{A.12}
\|A_g\|_{\cB(L^p(\bbR^n;d^nx))} \leq \|g\|_{L^1(\bbR^n;d^nx)}.
\end{equation}
If $\|g\|_{L^1(\bbR^n;d^nx)}=0$, then \eqref{A.8} is a trivial consequence of \eqref{A.12}.  To establish \eqref{A.8} in the nontrivial case $\|g\|_{L^1(\bbR^n;d^nx)}>0$, it suffices to find a sequence $\{f_j\}_{j=1}^{\infty}\subset L^p(\bbR^n;d^nx)$ such that $\|f_j\|_{L^p(\bbR^n;d^nx)}=1$ for all $j\in \bbN$ and
\begin{equation}\lb{A.13}
\lim_{j\to \infty}\|f_j\ast g\|_{L^p(\bbR^n;d^nx)} = \|g\|_{L^1(\bbR^n;d^nx)}.
\end{equation}
Consider the sequence $\{f_j\}_{j=1}^{\infty}\subset L^p(\bbR^n;d^nx)$ defined by
\begin{equation}
f_j = (2j)^{-n/p}\chi_{[-j,j]^n},\quad j\in \bbN.
\end{equation}
By inspection,  $\|f_j\|_{L^p(\bbR^n;d^nx)}=1$ for $j\in \bbN$.  Furthermore, the reflection symmetry of the hypercube $[-j,j]^n$, $j\in \bbN$, with respect to the origin $0\in \bbR^n$ in each of the $n$ coordinate directions implies:
\begin{align}
\|f_j\ast g\|_{L^p(\bbR^n;d^nx)}^p &= \int_{\bbR^n}d^nx\, \bigg|\int_{\bbR^n} d^ny\, f_j(x-y)g(y) \bigg|^p\no\\
&= \int_{\bbR^n}d^nx\, \bigg|\int_{\bbR^n}d^ny\, (2j)^{-n/p} \chi_{[-j,j]^n+x}(y)g(y) \bigg|^p\no\\
&= (2j)^{-n} \int_{\bbR^n}d^nx\, \bigg[\int_{[-j,j]^n+x}d^ny\, g(y)\bigg]^p,\quad j\in \bbN,\lb{A.15}
\end{align}
where in obvious notation
\begin{equation}
[-j,j]^n+x = \big\{y+x\,\big|\,y\in [-j,j]^n\big\},\quad x\in \bbR^n,\, j\in \bbN.
\end{equation}
Let $\varepsilon\in\big(0,\|g\|_{L^1(\bbR^n;d^nx)}\big)$.  The condition that $g$ is nonnegative a.e.~on $\bbR^n$ implies
\begin{equation}
\|g\|_{L^1(\bbR^n;d^nx)} = \int_{\bbR^n}d^ny\, g(y),
\end{equation}
so it is possible to choose $j(\varepsilon)\in \bbN$ such that
\begin{equation}
\int_{[-j(\varepsilon),j(\varepsilon)]^n}d^ny\, g(y) \geq \|g\|_{L^1(\bbR^n;d^nx)} - \frac{\varepsilon}{2}.
\end{equation}
For each $j>j(\varepsilon)$, one verifies that
\begin{equation}\lb{A.16}
[-j(\varepsilon),j(\varepsilon)]^n \subseteq [-j,j]^n+x\, \text{ whenever $x\in [-(j-j(\varepsilon)),j-j(\varepsilon)]^n$}.
\end{equation}
Combining \eqref{A.15} and \eqref{A.16}, one obtains:
\begin{align}
\|f_j\ast g\|_{L^p(\bbR^n;d^nx)}^p &\geq (2j)^{-n}\int_{[-(j-j(\varepsilon)),j-j(\varepsilon)]^n}d^nx\, \bigg[ \int_{[-j,j]^n+x}d^ny\, g(y)\bigg]^p\no\\
&\geq (2j)^{-n}\int_{[-(j-j(\varepsilon)),j-j(\varepsilon)]^n}d^nx\, \bigg[\int_{[-j(\varepsilon),j(\varepsilon)]^n}d^ny\, g(y)\bigg]^p\no\\
&\geq (2j)^{-n}\int_{[-(j-j(\varepsilon)),j-j(\varepsilon)]^n}d^nx\, \bigg[ \|g\|_{L^1(\bbR^n;d^nx)}-\frac{\varepsilon}{2}\bigg]^p\no\\
&= (2j)^{-n}\bigg[\|g\|_{L^1(\bbR^n;d^nx)}-\frac{\varepsilon}{2}\bigg]^p\int_{[-(j-j(\varepsilon)),j-j(\varepsilon)]^n}d^nx\no\\
&= \bigg(\frac{j-j(\varepsilon)}{j}\bigg)^n\bigg[\|g\|_{L^1(\bbR^n;d^nx)}-\frac{\varepsilon}{2}\bigg]^p,\quad j>j(\varepsilon),\,j\in \bbN.
\end{align}
As a result,
\begin{equation}
\|f_j\ast g\|_{L^p(\bbR^n;d^nx)} \geq \bigg(\frac{j-j(\varepsilon)}{j}\bigg)^{n/p}\bigg[\|g\|_{L^1(\bbR^n;d^nx)}-\frac{\varepsilon}{2}\bigg],\quad j>j(\varepsilon),\,j\in \bbN.
\end{equation}
The above analysis shows that for each fixed $\varepsilon\in \big(0,\|g\|_{L^1(\bbR^n;d^nx)}\big)$, there exists $j(\varepsilon)\in \bbN$ such that
\begin{align}
0&\geq \|f_j\ast g\|_{L^p(\bbR^n;d^nx)} - \|g\|_{L^1(\bbR^n;d^nx)}\no\\
&\geq \bigg[\bigg(\frac{j-j(\varepsilon)}{j}\bigg)^{n/p}-1\bigg]\|g\|_{L^1(\bbR^n;d^nx)} - \frac{\varepsilon}{2} \dott \underbrace{\bigg(\frac{j-j(\varepsilon)}{j}\bigg)^{n/p}}_{\leq 1}\no\\
&\geq \bigg[\bigg(\frac{j-j(\varepsilon)}{j}\bigg)^{n/p}-1\bigg]\|g\|_{L^1(\bbR^n;d^nx)} - \frac{\varepsilon}{2},\quad j>j(\varepsilon),\, j\in \bbN.\lb{A.19}
\end{align}
Since
\begin{equation}
\lim_{j\to \infty}\bigg[\bigg(\frac{j-j(\varepsilon)}{j}\bigg)^{n/p}-1\bigg]\|g\|_{L^1(\bbR^n;d^nx)} = 0,
\end{equation}
there exists $N(\varepsilon)\in \bbN$ with $N(\varepsilon)>j(\varepsilon)$ such that
\begin{equation}
\bigg[\bigg(\frac{j-j(\varepsilon)}{j}\bigg)^{n/p}-1\bigg]\|g\|_{L^1(\bbR^n;d^nx)} > -\frac{\varepsilon}{2},\quad j>N(\varepsilon),\, j\in \bbN.\lb{A.21}
\end{equation}
Thus, \eqref{A.19} and \eqref{A.21} yield the following result:  For each $\varepsilon\in \big(0,\|g\|_{L^1(\bbR^n;d^nx)}\big)$, there exists $N(\varepsilon)\in \bbN$ such that
\begin{equation}
-\varepsilon <   \|f_j\ast g\|_{L^p(\bbR^n;d^nx)} - \|g\|_{L^1(\bbR^n;d^nx)}  < \varepsilon,\quad j>N(\varepsilon),\, j\in \bbN.
\end{equation}
Hence, \eqref{A.13} holds.
\end{proof}

\begin{remark}
As a complement to Theorem \ref{tA.2}, we recall the following result for the product of a (pointwise) multiplication operator with a convolution operator in $L^2(\bbR^n;d^nx)$.  For $p\in (2,\infty)$, let $K_{f,g}$ denote the integral operator in $L^2(\bbR^n;d^nx)$ with integral kernel
\begin{equation}
k_{f,g}(x,y) = f(x)g(x-y)\,\text{ for a.e.~$(x,y)\in \bbR^n\times \bbR^n$}.
\end{equation}
An application of the Hausdorff--Young inequality and \cite[Theorem 4.1]{Si05} implies that $K_{f,g}\in\cB_p\big(L^2(\bbR^n;d^nx)\big)$ and
\begin{equation}
\|K_{f,g}\|_{\cB_p(L^2(\bbR^n;d^nx))} \leq \|f\|_{L^p(\bbR^n;d^nx)}\|g\|_{L^p(\bbR^n;d^nx)}.
\end{equation}
For further details, as well as extensions to more general integral operators, we refer to the remark given in \cite[p.~40]{Si05}.\hfill$\diamond$
\end{remark}

\medskip

\noindent 
{\bf Acknowledgments.} We are indebted to Andrei Martinez-Finkelshtein for very helpful hints to the literature. In addition, we are very grateful to the anonymous referee for the careful reading of our manuscript and for providing additional references and numerous suggestions that greatly improved the presentation throughout.  

 
\end{document}